\newtheorem{theorem}{Theorem}[section]
\newtheorem{corollary}[theorem] {Corollary}
\newtheorem{lemma} [theorem]{Lemma}
\newtheorem{definition}{Definition}[section]
\theoremstyle{remark}% To make the explanation in the example appear in normal font(not italic)
\newtheorem{remark}[theorem]{Remark}
\newtheorem{example}[theorem]{Example}
\newcommand{\PreserveBackslash}[1]{\let\temp=\\#1\let\\=\temp}
\newcolumntype{C}[1]{>{\PreserveBackslash\centering}p{#1}}
\newcolumntype{R}[1]{>{\PreserveBackslash\raggedleft}p{#1}}
\newcolumntype{L}[1]{>{\PreserveBackslash\raggedright}p{#1}}
\newcommand{\N}{\mathbb{N}}
\newcommand{\I}{\mathcal{K}}
\title{Decay of Chebyshev coefficients and error estimates of associated quadrature on shrinking intervals}
\author{Krishna Yamanappa Poojara \footnote{Computing and Mathematical Sciences, Caltech, Pasadena, CA 91125, USA, kyp6174@caltech.edu}, \and Sabhrant Sachan\footnote{Computing and Mathematical Sciences, Caltech, Pasadena, CA 91125, USA, ssachan@caltech.edu}, \and Ambuj Pandey \footnote{Indian Institute of Science Education and Research Bhopal (IISER Bhopal), ambuj@iiserb.ac.in}}
\date{}
\begin{document}
	
	\maketitle
    	\begin{center}
		Preliminary draft
	\end{center}
	\begin{abstract}
		We analyze decay of Chebyshev coefficients and local Chebyshev approximations for functions of finite regularity on finite intervals, focusing on the framework where the interval length tends to zero while the number of approximation nodes remains fixed. For all four families of Chebyshev polynomials, we derive error estimates that quantify the dependence on the interval length for (i) the decay of Chebyshev coefficients, (ii) the approximation error between continuous and discrete Chebyshev coefficients, and (iii) the convergence of Chebyshev-based quadrature rules.  These results fill a gap in the existing theory and provide a unified and rigorous description of how approximation accuracy scales on shrinking intervals, offering new theoretical insight and practical guidance for high-order numerical methods on decomposed domains. Numerical experiments corroborate the theoretical results, confirming the decay rates and illustrating the error behavior in practice.
	\end{abstract}
	
	\section{Introduction}

    Among the various families of orthogonal polynomials, the four classical Chebyshev families (of the first, second, third, and fourth kinds) play a central role due to their wide-ranging applications in interpolation, spectral methods, quadrature, and the numerical solution of differential and integral equations.  They are widely used, with the first-kind family distinguished by its near-minimal growth~\cite{arnold2001concise}, and all four families valued for their stable numerical behavior and compatibility with efficient algorithms such as the Fast Fourier Transform (FFT) see e.g. \cite{trefethen2019approximation,boyd2001chebyshev,mason2002chebyshev} and non-uniform FFT \cite{liu2000applications}. 
	Classical results on Chebyshev expansions primarily concern global approximation on the standard interval $[-1,1]$ \cite{xiang2010error,xiang2012convergence}. However, many modern applications---including spectral methods for differential equations, quadrature on non-standard domains, and localized wave scattering computations---require accurate local approximations on finite and often shrinking intervals $[a,b]\subset \mathbb{R}$. In particular, local Chebyshev expansions on finite subintervals have become central to the design of high-order numerical methods for singular integrals, fast multipole methods, and adaptive algorithms where the computational domain is decomposed into smaller subdomains, see e.g. \cite{bruno2020chebyshev,hu2021chebyshev,hao2014high,sideris2025high} and references therein. A key feature of these methods is that computational efficiency is maintained by fixing the number of approximation nodes, while accuracy is improved by shrinking the local approximation interval.

    Despite the central role of Chebyshev polynomials in numerical approximations on shrinking intervals, precise error bounds that explicitly capture the dependence on the interval length for the decay of Chebyshev coefficients and the error in Fej\'er quadratures are not systematically available in the literature. Existing results  over general finite intervals provide useful estimates, but they do not describe in detail how the approximation error depends on $b-a$, particularly in the asymptotic regime where $b-a \to 0$. This omission leaves a gap in the theory precisely in the context most relevant for localized high-order methods. To be precise, when employing local Chebyshev expansions on shrinking subintervals, two natural questions arise:
	\begin{enumerate}[label=\roman*.]
		\item How does the decay of Chebyshev coefficients depend on the regularity of the underlying function and on the interval length?
        
		\item How accurately can continuous Chebyshev coefficients be approximated by their discrete counterparts, particularly when the number of interpolation nodes is fixed but the interval length tends to zero?
	\end{enumerate}
	These questions are of both theoretical and practical importance. From a theoretical standpoint, precise decay bounds clarify the asymptotic behavior of Chebyshev expansions under local scaling on shrinking domains. From a practical standpoint, such results guide the design of efficient algorithms in which the number of Chebyshev nodes per subinterval remains fixed, while accuracy is enhanced by subdividing the computational domain into smaller pieces.
	
	In this paper, we develop error estimates that explicitly describe how the decay of Chebyshev coefficients, the error in their approximation by discrete counterparts, and the convergence of Chebyshev-based quadrature rules (see Table \ref{T:quadratures}) depend on the interval length, for all four classical families of Chebyshev polynomials (given in \eqref{eq:ChebyPoly}). Our main contributions are as follows:
	\begin{enumerate}
		\item \textbf{Decay bounds for Chebyshev coefficients.}  
		We establish sharp decay estimates for the Chebyshev coefficients of a function of finite regularity $f$ defined on $[a,b]$, applicable to all four Chebyshev families. These bounds explicitly capture the dependence on both the regularity of $f$ and the interval length $b-a$, thereby quantifying the rate at which the $k$-th coefficient decays as $b-a \to 0$.
        
		\item \textbf{Error between continuous and discrete Chebyshev coefficients.}  
		We derive error bounds for the approximation of continuous Chebyshev coefficients by their discrete counterparts, again for all four kinds of Chebyshev polynomials. The results show how the discrete coefficients converge to the continuous ones when the number of terms is fixed but the approximation interval shrinks ($b-a \to 0$).  
		
		\item \textbf{Applications to Chebyshev-based quadrature rules.}  
        Based on the novel results presented in 1. and 2., we derive error estimates for quadrature rules associated with Chebyshev polynomials of the first, second, third, and fourth kinds. In particular, we characterize the behavior of the quadrature error when the number of quadrature nodes is fixed while the length of the integration interval tends to zero. This analysis offers new insight into the local accuracy and convergence properties of Chebyshev-based quadrature schemes.

	\end{enumerate}

    The paper is organized as follows. Section~\ref{sec:prelim} introduces the four classical families of Chebyshev polynomials and their coefficients, along with interpolatory quadrature rules based on their zeros or extrema; the corresponding discrete coefficients are summarized in Table~\ref{T:quadratures}. Section~\ref{sec:decay} investigates the decay of Chebyshev coefficients, highlighting the effects of function regularity and interval length. In Section~\ref{sec:appln}, we discuss applications of these results, including errors in approximating continuous coefficients by discrete ones and quadrature errors as the interval shrinks. Section~\ref{sec:numerics} presents numerical results that confirm the predicted decay rates and quadrature convergence. We conclude with final remarks in Section~\ref{conclusion}.

	\section{Preliminaries}\label{sec:prelim}
	% {\color{red}
		% \begin{itemize}
			% \item Definition of four kind of Chebyshev polynomials and corresponding weights
			% \item Definition for Continuous coefficients,
			% \item Orthogonality relations
			% \item Five quadrature formulas
			% \item Discrete coefficients
			% \item Relation between first and other four kinds coefficients
			% \end{itemize}
		% }
	
	Among the most important classical orthogonal polynomials are the Chebyshev polynomials, which occur in four families: $T_n$, $U_n$, $V_n$, and $W_n$, corresponding to the first, second, third, and fourth kinds, respectively. Explicitly, if $x = \cos\theta$ with $\theta\in[0,\pi]$ and $n\ge 0$, they admit the trigonometric representations \cite{mason2002chebyshev}
	\begin{equation}\label{eq:ChebyPoly}
		T_n(x) = \cos(n\theta), \quad 
		U_n(x) = \frac{\sin((n+1)\theta)}{\sin\theta}, \quad 
		V_n(x) = \frac{\cos((n+1)\theta/2)}{\cos(\theta/2)}, \quad 
		W_n(x) = \frac{\sin((n+1)\theta/2)}{\sin(\theta/2)}.
	\end{equation}
	Each family is orthogonal with respect to a specific Jacobi weight function, and more generally, they can all be viewed as special cases of the Jacobi polynomials, a broad class of orthogonal polynomials. These Jacobi polynomials arise as solutions of the Sturm--Liouville problem \cite[Theorem~4.2.1]{szeg1939orthogonal}
	\begin{equation*}
		(1-x^2)y'' + \bigl(\beta-\alpha - (\alpha+\beta+2)x\bigr)y' + n(n+\alpha+\beta+1)y = 0,
	\end{equation*}
	where $x \in [-1,1]$, the parameters $\alpha,\beta > -1$ and a Jacobi polynomial of degree $n$ is denoted by $P_n^{(\alpha,\beta)}(x)$. They are orthogonal with respect to the weight function
	\[
	w^{\alpha,\beta}(x) = (1-x)^{\alpha}(1+x)^{\beta}.
	\]
	The Chebyshev polynomials of the first, second, third, and fourth kinds appear as a special cases of Jacobi polynomials corresponding to the parameter choices  
	\[
	(\alpha,\beta) = \Bigl(-\tfrac{1}{2},-\tfrac{1}{2}\Bigr), \quad
	(\alpha,\beta) = \Bigl(\tfrac{1}{2},\tfrac{1}{2}\Bigr), \quad
	(\alpha,\beta) = \Bigl(-\tfrac{1}{2},\tfrac{1}{2}\Bigr), \quad \text{and} \quad
	(\alpha,\beta) = \Bigl(\tfrac{1}{2},-\tfrac{1}{2}\Bigr),
	\]
	respectively. In particular, letting 
	\begin{equation}\label{eq:gamma-n}
		\gamma_n = \begin{cases} 1, &\text{if } n =0\\ 2, &\text{if } n\geq 1, \end{cases} \quad \text{and}\quad \delta_{nm} = \begin{cases}
			1, &\text{if } n = m\\ 0, &\text{if } n\neq m, \end{cases}
	\end{equation}
	the orthogonality relations of the Chebyshev polynomials mentioned above are  
	\begin{align}
		&\int_{-1}^1 T_n(x)T_m(x)(1-x^2)^{-1/2}\,dx = \frac{\pi}{\gamma_n}\,\delta_{nm},\\[6pt]
		&\int_{-1}^1 U_n(x)U_m(x)(1-x^2)^{1/2}\,dx = \tfrac{\pi}{2}\,\delta_{nm}, \\[6pt]
		&\int_{-1}^1 V_n(x)V_m(x)(1-x)^{-1/2}(1+x)^{1/2}\,dx = \pi\,\delta_{nm}, \\[6pt]
		&\int_{-1}^1 W_n(x)W_m(x)(1-x)^{1/2}(1+x)^{-1/2}\,dx = \pi\,\delta_{nm}.
	\end{align}
	
	% \begin{align}
		% T_n(x) &= \cos(n\theta), & w^{(1)}(x) &= (1-x^2)^{-1/2}, \\
		% U_n(x) &= \frac{\sin((n+1)\theta)}{\sin\theta}, & w^{(2)}(x) &= (1-x^2)^{1/2}, \\
		% V_n(x) &= \frac{\cos((n+1)\theta/2)}{\cos(\theta/2)}, & w^{(3)}(x) &= (1-x)^{-1/2}(1+x)^{1/2}, \\
		% W_n(x) &= \frac{\sin((n+1)\theta/2)}{\sin(\theta/2)}, & w^{(4)}(x) &= (1-x)^{1/2}(1+x)^{-1/2}.
		% \end{align}
	Given a continuous function $f$ on $[-1,1]$, one can expand $f$ as an infinite series in terms of the orthogonal family of Jacobi polynomials:
	\begin{equation}
		f(x) = \sum_{k=0}^{\infty} c_k P_{k}^{(\alpha,\beta)}(x), \quad x \in [-1,1].
	\end{equation}
	
	When the Jacobi polynomials $P_k^{(\alpha,\beta)}$ specialize to the Chebyshev polynomials $T_k$, $U_k$, $V_k$, and $W_k$, we denote the corresponding expansion coefficients by $c_k^{(d)}$ with $d=1,2,3,4$, respectively. These are referred to as the continuous Chebyshev coefficients of the first, second, third, and fourth kinds, respectively. Explicitly, they are defined by  
	\begin{align}
		\label{eq:ck1-cont}
		c_k^{(1)} &= \frac{\gamma_k}{\pi} \int_{-1}^{1} f(x)\,T_k(x)(1-x^2)^{-1/2}\,dx, \\
		c_k^{(2)} &= \frac{2}{\pi} \int_{-1}^{1} f(x)\,U_k(x)(1-x^2)^{1/2}\,dx, \\
		c_k^{(3)} &= \frac{1}{\pi} \int_{-1}^{1} f(x)\,V_k(x)(1-x)^{-1/2}(1+x)^{1/2}\,dx, \\
		c_k^{(4)} &= \frac{1}{\pi} \int_{-1}^{1} f(x)\,W_k(x)(1-x)^{1/2}(1+x)^{-1/2}\,dx,\label{eq:ck4-cont}
	\end{align}
	respectively.
	The decay rates of these coefficients play a crucial role in analyzing the errors of quadrature rules derived from Jacobi polynomials, such as Fejér first and Clenshaw–Curtis quadratures~\cite{xiang2012convergence}.
	% The Chebyshev polynomials satisfy the standard recurrence relation~\cite{mason2002chebyshev}
	% \begin{equation}
		%     p_{n+1}(x) = 2x p_n(x) - p_{n-1}(x), \quad p_n \in \{T_n, U_n, V_n, W_n\}, \quad x\in [-1,1].
		% \end{equation}
     The Fej\'er first and second quadratures can be viewed as interpolatory quadratures \cite[Sec.~2.5.5]{davis2007methods} corresponding to the zeros of the Chebyshev polynomials $T_n$ and $U_n$, respectively, while the Clenshaw--Curtis quadrature is an interpolatory quadrature associated with the extrema of $T_n$ \cite[Sec.~6.4.2]{plonka2018numerical}. In a similar manner, we introduce quadratures based on the zeros of $V_n$ and $W_n$. Although these quadratures are known in the literature \cite{notaris1997interpolatory}, no standard naming convention exists. For consistency with the Fej\'er first and second quadratures, we shall refer to them as the ``Fej\'er third'' and ``Fej\'er fourth'' quadratures throughout this paper. In what follows, we briefly summarize these five quadratures.
	%------------ Tabulated form
	\begin{definition}\label{def:quad}
		Quadrature rules based on Chebyshev polynomials approximate the integral of a continuous function $f \in C[a,b]$ by  
		\begin{equation}\label{eq:quad}
			\int_{a}^b f(t)\,dt \;\approx\; Q_n[f],\quad \text{where}\quad Q_n[f] = \frac{h}{2}\sum_{j=0}^{n-1} w_j f(\xi(t_j)), 
			\quad 
			w_j = \int_{-1}^1 L_j(t)\,dt
		\end{equation}
		and $\xi : [-1,1] \to [a,b]$ is defined by $\xi(t) = a+\frac{h}{2}(t+1)$ with $h=b-a$. Here $L_j$ denotes the $j$-th Lagrange basis polynomial associated with the nodes $t_j = \cos(\theta_j)$.  
		We consider the five classical Chebyshev-based interpolatory quadratures: Fejér first (F-I), Fejér second (F-II), Fejér third (F-III), Fejér fourth (F-IV) for $n\ge 1$, and Clensha-Curtis (CC) for $n\ge 2$. Their nodes, weights, basis polynomials, and discrete coefficients are summarized in Table~\ref{T:quadratures}.  
		Here $\gamma$ is defined in \eqref{eq:gamma-n}, and  
		\begin{equation}\label{eq:gamma-tilde}
			\widetilde{\gamma}_j = 
			\begin{cases}
				2, & j=0 \text{ or } j=n-1,\\
				1, & 0 < j < n-1,
			\end{cases}
		\end{equation}
		with `Quad’ denoting the specific quadrature, $\theta_j\in[0,\pi]$ and  $t_j=\cos(\theta_j)\in[-1, 1]$ the quadrature nodes, $w_j$ the corresponding weights, $L_j$ the associated Lagrange basis functions, and $\widetilde{c}_k$ the discrete Chebyshev coefficients defined with respect to these nodes.
		\begin{table}[!htb]
			\centering
			\scalebox{0.88}{
				\begin{tabular}{|C{0.82cm}|C{1.2cm}|C{5.75cm}|C{4.95cm}|C{4.8cm}|}
					\hline
					Quad &  $\theta_j$& $w_j$ & $L_j$ & $\widetilde{c}_k$\\ \hline
					F-I&  $\dfrac{2j+1}{2n}\pi$&  $\displaystyle\frac{2}{n}\left[1-2\sum\limits_{k=1}^{\lfloor \tfrac{n}{2}\rfloor} \frac{\cos\left(2k\theta_j\right)}{4 k^{2}-1} \right]$ & $\displaystyle\frac{1}{n}\sum_{k=0}^{n-1}\gamma_k T_{k}(t_{j})T_{k}(t)$ & $\displaystyle\frac{\gamma_{k}}{n}\sum\limits_{j=0}^{n-1} f(t_{j}) T_{k} (t_j)$\\[4ex] \hline
					CC&  $\dfrac{j\pi}{n-1}$&  $\displaystyle\frac{2}{(n-1)\widetilde{\gamma}_j}\hspace{-2mm}\left[1-\sum_{k=1}^{\lfloor \tfrac{n-1}{2} \rfloor} \frac{2}{\widetilde{\gamma}_{2k}} \frac{\cos{(2k\theta_j)}}{4k^2-1} \right]$ & $\displaystyle\frac{2}{(n-1)\widetilde \gamma_j} \sum_{k=0}^{n-1} \frac{1}{\widetilde \gamma_k} T_k(t_j)T_k(t)$ & $\displaystyle\frac{2}{(n-1)\widetilde \gamma_k} \sum_{j=0}^{n-1} \frac{1}{\widetilde \gamma_j}f(t_j)T_k(t_j)$ \\[4ex] \hline
					F-II&  $\dfrac{j+1}{n+1}\pi$&   $ \displaystyle\frac{4\sin{(\theta_j)}}{n+1}\sum_{k=1}^{\lfloor \tfrac{n+1}{2} \rfloor}  \frac{\sin{((2k-1)\theta_j)}}{2k-1}$ & $\displaystyle\frac{2}{n+1} \sum_{k=0}^{n-1} U_k(t_j)U_k(t) (1-t_j^2)$ & $\displaystyle\frac{2}{n+1}\sum_{j=0}^{n-1}f(t_j)U_k(t_j) (1-t_j^2)$\\[4ex] \hline
					F-III&  $\dfrac{2j+1}{2n+1}\pi$&   $\displaystyle\frac{4\sin{(\theta_j)}}{n+\tfrac{1}{2}}\sum_{k=1}^{\lfloor \tfrac{n+1}{2} \rfloor} \frac{\sin{((2k-1)\theta_j)}}{2k-1}$ & $\displaystyle\frac{1}{n+\tfrac{1}{2}} \sum_{k=0}^{n-1} V_k(t_j)V_k(t) (1+t_j)$ & $\displaystyle\frac{2}{n+\tfrac{1}{2}}\sum_{j=0}^{n-1}f(t_j)V_k(t_j) (1+t_j)$\\[4ex] \hline
					F-IV& $\dfrac{2j+2}{2n+1}\pi$& $\displaystyle\frac{4\sin{(\theta_j)}}{n+\tfrac{1}{2}}\sum_{k=1}^{\lfloor \tfrac{n+1}{2} \rfloor} \frac{\sin{((2k-1)\theta_j)}}{2k-1}$ & $\displaystyle \frac{1}{n+\tfrac{1}{2}} \sum_{k=0}^{n-1} W_k(t_j)W_k(t) (1-t_j)$ & $\displaystyle\frac{2}{n+\tfrac{1}{2}}\sum_{j=0}^{n-1}f(t_j)W_k(t_j) (1-t_j)$\\[4ex] \hline
			\end{tabular}}
			\caption{Nodes are given by $t_j = \cos(\theta_j)$ for $j=0,\dots,n-1$ in each quadrature.}
			\label{T:quadratures}
		\end{table}
	\end{definition}
	
	All five quadratures are interpolatory corresponding to the grids $t_j=\cos(\theta_j)$ with $\theta_j$ are as mentioned in Table~\ref{T:quadratures}. For a given continuous function $f$ in $[-1,1]$, the Lagrange interpolating polynomial is  
	\begin{equation}\label{eq:Lag-poly}
		P_n[f](t) = \sum_{j=0}^{n-1} L_j(t) f(t_j),
	\end{equation}
	with $L_j$ as listed in Table~\ref{T:quadratures}. Moreover, $P_n[f](t)$ can be expressed as a discrete Chebyshev expansion. For example, in the case of F-I,  
	\begin{equation}
		P_n[f](t) = \sum_{k=0}^{n-1} \widetilde{c}_k T_k(t),
	\end{equation}
	where $\widetilde{c}_k$ are the discrete Chebyshev coefficients corresponding to the F-I quadrature grids as given in Table~\ref{T:quadratures}.  
	
	In the next section, we present decay results for the continuous Chebyshev coefficients of functions $f \in X^m([a,b])$ (or simply $ X^m[a,b]$) defined in \eqref{eq:ck1-cont}, where $X^m[a,b]$ denotes the class of $m$-times continuously differentiable functions whose $(m+2)$-th derivative is piecewise continuous, i.e.  
	\begin{equation}\label{eq:Xm-def}
		X^m[a,b]  = C^{m}[a,b] \cap C^{m+2}_{\mathrm{pw}}[a,b].
	\end{equation}
	Throughout this paper, the symbol $C$ stands for a positive constant taking on different values on different occurrences.

	\section{Decay of the Chebyshev Coefficients With Interval Length}\label{sec:decay}
	In this section, we derive a decay estimate of the Chebyshev coefficients in interval $[a,b]$ that not only depends on the regularity of the function $f$ but also on the length of the underlying interval as shrinks to zero. In literature, many authors have discussed the convergence of Chebyshev coefficients, for instance see  \cite{xiang2015error,xiang2010error,xiang2013convergence,trefethen2008gauss} and references therein, however, in best of our knowledge, the decay rate in terms of the length of the interval are not systematically reported. We now examine the decay rates of the coefficients with respect to the interval size $h=b-a$ and the index $k$.
    
    Given a continuous function $f$ on $[a, b]$, for simplicity of notation we write $c_k$ in place of $c_k^{(1)}$ (defined in~\eqref{eq:ck1-cont}), to denote the continuous Chebyshev coefficients of $f$ with respect to $T_n$, where  
\begin{equation}\label{eq:util_def}
    \widetilde{f}(t) = f\bigl(\xi(t)\bigr), 
    \quad \xi(t) = \tfrac{h}{2}t + \tfrac{a+b}{2}, 
    \quad -1 \le t \le 1.
\end{equation}
To estimate $c_k$ in terms of the interval length $h$ and the coefficient index $k$, it is essential to understand the asymptotic behavior of $c_k$ as $h \to 0$. By Lemma~\ref{lem:decay}, we obtain  
\begin{equation*}
    c_0 \;\to\; f\left(\frac{a+b}{2}\right), 
    \quad \text{and} \quad 
    c_k \;\to\; 0 \quad \text{for all} \quad k \ge 1 
    \quad \text{as} \quad h \to 0.
\end{equation*}
In the following, we present the main result of this paper, which establishes the decay rate of the Chebyshev coefficients $c_k^{(\ell)}$ for $k \ge 1$, $\ell=1,2,3,4$ (defined in \eqref{eq:ck1-cont}--\eqref{eq:ck4-cont}) as $h \to 0$, when $f$ has finite regularity.
    \begin{theorem}\label{Thm_Cheby_Coeff}
		Let $k \geq 1$ be an integer, and let $c_{k}^{(\ell)}$ ($\ell=1,2,3,4$) denote the $k$-th Chebyshev coefficient of $f$ with respect to the $i$-th kind Chebyshev polynomials as defined in \eqref{eq:ck1-cont}--\eqref{eq:ck4-cont}. Then
        \begin{equation}\label{eq:Cheby_bound}
            |c_{k}^{(\ell)}| \le C\dfrac{h^{\min\{k,s\}}}{k^{m+2}} \quad \text{where} \quad s = \begin{dcases}
                m+1, \text{ if } f\in X^m[a,b], \\
                m+2, \text{ if } f\in C^{m+2}[a,b],
            \end{dcases}
        \end{equation}
		$h = b - a$ is the length of the interval $[a,b]$ such that $h \le 1$ and $C$ is a constant independent of $k$, $h$ and $\ell$. 
	\end{theorem}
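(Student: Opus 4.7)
My plan is to first reduce to the reference interval $[-1,1]$ via the affine substitution $x=\xi(t)$ of \eqref{eq:util_def}; setting $\widetilde{f}(t)=f(\xi(t))$ and using the scaling $\widetilde{f}^{(j)}(t)=(h/2)^j f^{(j)}(\xi(t))$, each $c_k^{(\ell)}$ becomes a constant multiple of $\int_{-1}^{1} \widetilde{f}(t)\,p_k^{(\ell)}(t)\,w^{(\ell)}(t)\,dt$, where $p_k^{(\ell)}\in\{T_k,U_k,V_k,W_k\}$ and $w^{(\ell)}$ is the corresponding Jacobi weight. I would then split the analysis into the two regimes $k\le s$ and $k>s$, matching the two branches of $\min\{k,s\}$ in \eqref{eq:Cheby_bound}.

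For $k\le s$ the idea is to Taylor-expand $\widetilde{f}$ around $t=0$ to order $k-1$ and exploit that each of $T_k, U_k, V_k, W_k$ is orthogonal to every polynomial of degree $<k$ with respect to its Jacobi weight. The polynomial part then integrates to zero; the Taylor remainder obeys $|R(t)|\le C(h/2)^k|t|^k\,\|f^{(k)}\|_\infty$, and combining with the crude bounds $|p_k^{(\ell)}|\le C$ and $\int_{-1}^{1}w^{(\ell)}(t)\,dt<\infty$ yields $|c_k^{(\ell)}|\le C h^k$. Since $k\le s\le m+2$ implies $k^{-(m+2)}\ge (m+2)^{-(m+2)}$, this is equivalent to $|c_k^{(\ell)}|\le Ch^k/k^{m+2}$, which is the required bound on the branch $\min\{k,s\}=k$.

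For $k>s$ I would invoke the classical Chebyshev-decay estimate on $[-1,1]$: for $g$ with $g^{(m+1)}$ of bounded variation, the first-kind coefficients satisfy $|\widehat{c}_k(g)|\le CV(g^{(m+1)})/k^{m+2}$, and the same rate holds for the other three kinds, either by expressing $c_k^{(\ell)}$ as a linear combination of two consecutive cosine Fourier coefficients of $g(\theta)=\widetilde{f}(\cos\theta)$ via product-to-sum identities, or by direct integration by parts (using that $g$ extends to an even $2\pi$-periodic function, so odd-order boundary terms at $\theta=0,\pi$ vanish). Applied to $g=\widetilde{f}$, the problem reduces to tracking the $h$-scaling of $V(\widetilde{f}^{(m+1)})$; the chain rule yields $V_{[-1,1]}(\widetilde{f}^{(m+1)})=(h/2)^{m+1}V_{[a,b]}(f^{(m+1)})$. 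For $f\in X^m[a,b]$, $f^{(m+1)}$ may carry jumps inside $[a,b]$, so $V_{[a,b]}(f^{(m+1)})$ is only bounded by an $f$-dependent constant, producing $|c_k^{(\ell)}|\le Ch^{m+1}/k^{m+2}$. For $f\in C^{m+2}[a,b]$, $f^{(m+1)}$ is $C^1$, so $V_{[a,b]}(f^{(m+1)})=\int_a^b|f^{(m+2)}|\,dx\le h\|f^{(m+2)}\|_\infty$, gaining one extra power of $h$ and giving $|c_k^{(\ell)}|\le Ch^{m+2}/k^{m+2}$. In both cases the exponent of $h$ equals $s$.

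The main obstacle I anticipate is the sharp bookkeeping in the $X^m$ case: one must cleanly separate the $(h/2)^{m+1}$ coming from the chain rule from the $O(1)$ contribution of jump discontinuities of $f^{(m+1)}$, while keeping $C$ independent of $k$, $h$, and $\ell$ and not letting it hide a spurious dependence on the number or location of those jump points. A secondary technical point is making the integration-by-parts argument uniform across the four kinds, since $V_k$ and $W_k$ involve half-integer frequencies and extra $\cos(\theta/2),\sin(\theta/2)$ factors in their trigonometric representations; the cleanest workaround is the product-to-sum reduction above, which rewrites $c_k^{(3)}$ and $c_k^{(4)}$ directly as cosine Fourier coefficients of $g$ and thereby reuses the $T_k$ analysis verbatim, ensuring that the final constant is uniform in $\ell$.
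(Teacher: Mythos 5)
Your proposal is correct, but it follows a genuinely different route from the paper. You split into the two regimes $k\le s$ and $k>s$: for small $k$ you use orthogonality of $p_k^{(\ell)}$ to polynomials of degree $<k$ together with a Taylor remainder scaling like $(h/2)^k$, and for large $k$ you invoke the classical bounded-variation decay theorem on $[-1,1]$ and track the $h$-dependence through $V_{[-1,1]}(\widetilde f^{(m+1)})=(h/2)^{m+1}V_{[a,b]}(f^{(m+1)})$, gaining the extra factor of $h$ in the $C^{m+2}$ case from $V_{[a,b]}(f^{(m+1)})=\int_a^b|f^{(m+2)}|\le h\|f^{(m+2)}\|_\infty$. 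The paper instead runs a single unified argument: iterated integration by parts in $\theta$ produces the recursion of Lemma~\ref{lemma_expression_Ck_induction} and the representation $c_k=\sum_{i,j}h^i k^{-M}\alpha_{ij}^{(M)}I_M[f,(i,j,k)]$ of Lemma~\ref{lemma_Cheby_Coeff_form}, which delivers the full $k^{-(m+2)}$ decay, and then Lemma~\ref{lemma_Cheby_Coeff_Taylor_thm} Taylor-expands $I_M[f,(i,j,k)](\mu)$ in the auxiliary parameter $\mu$ and uses the vanishing moments of Lemma~\ref{Ap1} to show $I_M[f,(i,j,k)]=O(h^{\min\{k,s\}-i})$, so both powers emerge simultaneously without a case split. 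Your version is shorter and more modular (it reuses a known theorem and makes the origin of each power of $h$ transparent), while the paper's version is self-contained and its recursive machinery is reused downstream (e.g.\ the $O(1/k)$ bound on $I_{m+1}[f,(m+1,j,k)]$ and the quadrature estimates). Two small points you essentially handle but should state explicitly: in the regime $k\le s$ the conversion from $Ch^k$ to $Ch^k/k^{m+2}$ only works because $k$ ranges over the finite set $\{1,\dots,m+2\}$ there (so $k^{m+2}\le(m+2)^{m+2}$ is absorbed into $C$, which may depend on $m$ and $f$ but not on $k$, $h$, $\ell$), and in the regime $k>s$ the classical denominator $k(k-1)\cdots(k-m-1)$ is bounded below by $k^{m+2}/(m+2)^{m+2}$ uniformly for $k\ge m+2$. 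Your worry about the jump discontinuities of $f^{(m+1)}$ is not an obstacle: the constant is permitted to depend on $f$, and $V_{[a,b]}(f^{(m+1)})$ is dominated by the total variation over the full fixed domain independently of $h$. Finally, your reduction of $\ell=2,3,4$ to $\ell=1$ via product-to-sum identities is exactly the paper's \eqref{eq:ck2-using-ck1}--\eqref{eq:ck4-using-ck1}, and the index shifts $k\mapsto k+1,k+2$ only improve the bound since $h\le 1$.
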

The proof of Theorem~\ref{Thm_Cheby_Coeff} is presented at the end of this section, following two preparatory lemmas, namely Lemma~\ref{lemma_expression_Ck_induction} and Lemma~\ref{lemma_Cheby_Coeff_Taylor_thm}. Recall that the $k$-th Chebyshev coefficient of the first kind, $c_k$, is defined in \eqref{eq:ck1-cont}. By applying the substitution $x=\cos(\theta)$ and performing integration by parts, one can express $c_k$ as
%= \frac{1}{\pi} \left(\widetilde{f}(\cos{\theta})\frac{\sin(k\theta)}{k}\right)_{-\pi}^{\pi} - \frac{1}{\pi}\int_{-\pi}^{\pi} \left(-\widetilde{f}(\cos{\theta})\frac{h}{2} \sin(\theta) \right) \frac{\sin{(k\theta)}}{k}d\theta 
	\begin{equation}\label{Eq_Cheby_Coeff_intPart_one} 
		c_k = \frac{1}{\pi}\int_{-\pi}^{\pi}\widetilde{f}(\cos{\theta})\cos(k\theta)d\theta = \frac{h}{2k}\frac{1}{\pi}\int_{-\pi}^{\pi}\widetilde{f}'(\cos{\theta})\sin(\theta)\sin{(k\theta)}d\theta.  
	\end{equation}
As the aim of this section is to derive a decay estimate of $c_k$ in terms of $h$ and $k$, the above equation \eqref{Eq_Cheby_Coeff_intPart_one} shows that $c_k$ can be expressed as the powers of $k$ and $h$. To obtain the optimal powers of $h$ and $k$, one of the main ideas is to perform the integration parts iteratively. To simplify calculations, we now define the following notation:
	\begin{equation}\label{Ck_int_by_parts_term}
		I_{M}[f,(i,j,k)] = \int_{-\pi}^{\pi} \widetilde{f}^{(i)}(\cos{\theta})\sin^{j}(\theta)\cos^{i-j}(\theta)\zeta_{M}(k\theta)d\theta,  
	\end{equation}
	where $i,M$ are non-negative integers, and $j$ is an integer such that $j\le i$. If $j<0$, then we define $I_{M}[f,(i,j,k)] = 0$. Note that $M$ corresponds to the number of times integration by parts is performed and  $\zeta_M(\theta) = \cos{\theta}$ if $M$ is even, else $\zeta_M(\theta) = \sin{\theta}$. The notation $\widetilde{f}^{(i)}$ represents the $i$-th derivative of $\widetilde{f}$, with $i$ determined by the regularity of $\widetilde{f}$. For $f \in X^m$ (see \eqref{eq:Xm-def}), the following lemma provides a recursive relation for $I_{M}[f,(i,j,k)]$ (as defined in \eqref{Ck_int_by_parts_term}), which serves as an important tool in deriving bounds for $c_k$.
	\begin{lemma}\label{lemma_expression_Ck_induction}
		If $f\in X^{m}[a,b]$ then for integers $i$, $j$, with $0\leq j \le i\leq m$ and $k\in \N$, the integral $I_{M}[f,(i,j,k)]$ defined in \eqref{Ck_int_by_parts_term} satisfies the following recursive relation:
        \begin{align}\label{eq_rec_reln}
I_{M}[f,(i,j,k)] 
  = \frac{(-1)^M}{k}\Big[&
        - j \, I_{M+1}[f,(i,j-1,k)]  \notag \\
  & + (i-j)\, I_{M+1}[f,(i,j+1,k)] \\
  & + \tfrac{h}{2}\, I_{M+1}[f,(i+1,j+1,k)]
     \;\Big]. \notag
\end{align}
%\begin{equation}\label{eq_rec_reln}
%		I_{M}[f,(i,j,k)] =  \frac{(-1)^M}{k}\left[-jI_{M+1}[f,(i,j-1,k)]+(i-j)I_{M+1}[f,(i,j+1,k)]+ \frac{h}{2}I_{M+1}[f,(i+1,j+1,k)]\right]. 
%	%\end{equation} 
	Moreover, if $f\in C^{m+2}[a,b]$ then equation \eqref{eq_rec_reln} holds for $0\leq j\leq i\leq m+1$.
	\end{lemma}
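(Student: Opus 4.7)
The plan is to establish the recursion by a single integration by parts on the definition \eqref{Ck_int_by_parts_term}, grouping the composite and algebraic factors into
\begin{equation*}
u = \widetilde{f}^{(i)}(\cos\theta)\,\sin^{j}\theta\,\cos^{i-j}\theta, \qquad dv = \zeta_{M}(k\theta)\,d\theta.
\end{equation*}
A parity check on $M$ yields $v = \tfrac{(-1)^{M}}{k}\,\zeta_{M+1}(k\theta)$ (for $M$ even, $\int\cos(k\theta)\,d\theta = \sin(k\theta)/k$; for $M$ odd, $\int\sin(k\theta)\,d\theta = -\cos(k\theta)/k$), which furnishes the $(-1)^{M}/k$ prefactor on the right-hand side of \eqref{eq_rec_reln}. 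The boundary contribution $[uv]_{-\pi}^{\pi}$ vanishes: for $j\ge 1$ it dies from $\sin^{j}(\pm\pi)=0$ directly, and for $j=0$ it dies because $u(\pi)=u(-\pi)=(-1)^{i}\widetilde{f}^{(i)}(-1)$ together with $v(\pi)=v(-\pi)$, the latter since $\zeta_{M+1}(\pm k\pi)$ equals either $\sin(\pm k\pi)=0$ or $\cos(\pm k\pi)=(-1)^{k}$ symmetrically.

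The core calculation is then $du/d\theta$ by the product rule. Chain-rule differentiation of the composite factor $\widetilde{f}^{(i)}(\cos\theta)$ produces $-(h/2)\,\widetilde{f}^{(i+1)}(\cos\theta)\sin\theta$, where the $h/2$ arises from $\xi'(\cdot)=h/2$; this is precisely where the $h/2$ multiplying $I_{M+1}[f,(i+1,j+1,k)]$ in \eqref{eq_rec_reln} originates. Differentiating the trigonometric factor yields $\widetilde{f}^{(i)}(\cos\theta)\bigl[j\sin^{j-1}\theta\cos^{i-j+1}\theta - (i-j)\sin^{j+1}\theta\cos^{i-j-1}\theta\bigr]$. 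Applying the reindexings $i-(j-1)=i-j+1$ and $i-(j+1)=i-j-1$ identifies the three resulting integrals as $I_{M+1}[f,(i+1,j+1,k)]$, $I_{M+1}[f,(i,j-1,k)]$, and $I_{M+1}[f,(i,j+1,k)]$; combining with the overall sign from $-\int v\,du$ reproduces \eqref{eq_rec_reln} exactly.

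The remaining point is regularity: the integration by parts is valid without interior jump-correction terms provided $u$ is absolutely continuous on $[-\pi,\pi]$, which requires $\widetilde{f}^{(i+1)}(\cos\theta)$ to exist in an $L^{1}$ sense. For $f\in X^{m}[a,b]$ and $i\le m$, piecewise continuity of $f^{(m+2)}$ forces $f^{(m+1)}$ to be continuous (any antiderivative of a piecewise continuous function is continuous), so $u$ is $C^{1}$ and integration by parts on the full interval is justified. For $f\in C^{m+2}[a,b]$ the identical argument extends the admissible range to $i\le m+1$, which is the stated improvement. I expect the main obstacle to be sign/parity bookkeeping around $\zeta_{M}$—ensuring that the $(-1)^{M}/k$ prefactor, the minus sign from $-\int v\,du$, and the signs from differentiating $\sin^{j}\theta$ and $\cos^{i-j}\theta$ combine consistently—after which the derivation reduces to a routine product-rule computation.
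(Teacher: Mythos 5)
Your computation of the recursion itself—the choice of $u$ and $dv$, the identity $v=\tfrac{(-1)^M}{k}\zeta_{M+1}(k\theta)$, the vanishing of the boundary term by periodicity, and the three product-rule terms with the $h/2$ coming from $\xi'=h/2$—matches the paper's derivation exactly and the sign bookkeeping checks out. The problem is your regularity justification for the endpoint case $i=m$. You assert that for $f\in X^m[a,b]$, ``piecewise continuity of $f^{(m+2)}$ forces $f^{(m+1)}$ to be continuous.'' This is false for the class $X^m=C^m\cap C^{m+2}_{\mathrm{pw}}$: membership in $C^{m+2}_{\mathrm{pw}}$ only means $f$ is $C^{m+2}$ on each piece of some finite partition, and the one-sided values of $f^{(m+1)}$ at the junctions need not agree. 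The paper's own test function $f(x)=x^m|x|+e^x$ lies in $X^m$ precisely because $f^{(m+1)}$ jumps at $0$; if your claim were true, the entire distinction between the cases $s=m+1$ and $s=m+2$ in Theorem~\ref{Thm_Cheby_Coeff} would collapse. So as written, the step that justifies integrating by parts across the whole interval when $i=m$ does not hold up.

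The conclusion is nevertheless salvageable, and your own earlier sentence contains the right criterion: integration by parts without interior jump corrections needs $u$ to be absolutely continuous, not $C^1$. For $i=m$ the factor $\widetilde f^{(m)}(\cos\theta)$ is continuous (since $f\in C^m$) and piecewise $C^1$ with bounded one-sided derivatives, hence absolutely continuous, so the global integration by parts is legitimate even though $u'$ has jumps; the resulting integral of $u'\,\zeta_{M+1}$ is unaffected by the finitely many jump points. The paper reaches the same conclusion by a more hands-on route: it partitions $[-\pi,\pi]$ at the discontinuity points $\{s_\ell\}$ of $\widetilde f^{(m+1)}(\cos\theta)$, integrates by parts on each subinterval, observes that the interior boundary contributions telescope and cancel by continuity of $f^{(m)}$, and then reassembles the sum of integrals into a single integral over $[-\pi,\pi]$. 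Either repair works; you should replace the false continuity claim with one of them. The extension to $i\le m+1$ for $f\in C^{m+2}$ is fine as you state it, since there $\widetilde f^{(m+2)}$ is genuinely continuous.
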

	\begin{proof}
		Using integration by parts, and the relation 
		\begin{equation*}
			\int \zeta_M(k\theta)d\theta = \frac{(-1)^M}{k}\zeta_{M+1}(k\theta),
		\end{equation*}
		for $0\leq i \leq m-1$, $I_{M}[f,(i,j,k)]$ can be written as
		\begin{align} 
			I_{M}[f,(i,j,k)] &= \int_{-\pi}^{\pi} \widetilde{f}^{(i)}(\cos{\theta})\sin^{j}(\theta)\cos^{i-j}(\theta)\zeta_{M}(k\theta)d\theta \nonumber \\
			&= \left(\widetilde{f}^{(i)}(\cos{\theta})\sin^{j}(\theta)\cos^{i-j}(\theta)\frac{(-1)^M}{k}\zeta_{M+1}(k\theta)\right)_{-\pi}^{\pi} \nonumber\\
			&\quad + \frac{(-1)^{M+1}}{k}\int_{-\pi}^{\pi} \left(\widetilde{f}^{(i)}(\cos{\theta})\sin^{j}(\theta)\cos^{i-j}(\theta) \right)' \zeta_{M+1}(k\theta)d\theta \label{int_by_parts_step}
        \end{align}
        which can be further simplified as
        \begin{align}
			I_{M}[f,(i,j,k)] &= \frac{(-1)^{M}}{k} \frac{h}{2}\int_{-\pi}^{\pi} \widetilde{f}^{(i+1)}(\cos{\theta})\sin^{j+1}(\theta)\cos^{i-j}(\theta) \zeta_{M+1}(k\theta)d\theta \nonumber\\
			&\quad +\frac{(-1)^{M+1} j}{k}\int_{-\pi}^{\pi}\widetilde{f}^{(i)}(\cos{\theta})\sin^{j-1}(\theta)\cos^{i-(j-1)}(\theta)  \zeta_{M+1}(k\theta)d\theta \nonumber\\
			&\quad +\frac{(-1)^{M}}{k}(i-j)\int_{-\pi}^{\pi} \widetilde{f}^{(i)}(\cos{\theta})\sin^{j+1}(\theta)\cos^{i-(j+1)}(\theta) \zeta_{M+1}(k\theta)d\theta .
		\end{align}
		Hence, for $f\in X^{m}[a,b]$ we obtain the recurrence relation~\eqref{eq_rec_reln} for $1 \le i \leq m-1$.
		Since $f^{(m)}$ is not differentiable in $[a,b]$ and $\widetilde{f}(\cos{(\cdot)})\in C^{m+2}_{\mathrm{pw}}[-\pi,\pi]$, there exist discontinuities of $(m+1)$-th derivative of $\widetilde{f}(\cos{\theta})$, namely 
        \begin{equation}\label{eq:discont_pts}
         \{s_{\ell}\}_{\ell=0}^{n_d}\quad \text{for some}\quad n_{d}\in \N \quad   \text{such that}\quad [-\pi,\pi] = \bigcup_{\ell=1}^{n_d} [s_{\ell-1},s_\ell].
        \end{equation}
         Therefore, $\widetilde{f}(\cos{(\cdot)})\in C^{m+2}(s_{\ell-1},s_{\ell})$ for each $\ell$. Using this decomposition, an application of integration by parts one more time gives
		\begin{align}
			I_{M}[f,(m,j,k)] &= \int_{-\pi}^{\pi} \widetilde{f}^{(m)}(\cos{\theta})\sin^{j}(\theta)\cos^{i-j}(\theta)\zeta_{m}(k\theta)d\theta \nonumber\\
			&= \sum_{\ell=1}^{n_d}\int_{s_{\ell-1}}^{s_{\ell}} \widetilde{f}^{(m)}(\cos{\theta})\sin^{j}(\theta)\cos^{i-j}(\theta)\zeta_{m}(k\theta)d\theta \nonumber\\
			&= \frac{(-1)^m}{k}\sum_{\ell=1}^{n_d}\left[ \left(\widetilde{f}^{(m)}(\cos{\theta})\sin^{j}(\theta)\cos^{i-j}(\theta) \zeta_{m+1}(k\theta)\right)_{s_{\ell-1}}^{s_\ell} \right. \nonumber\\
			&\left.\hspace{1cm} -\int_{s_{\ell-1}}^{s_{\ell}} \left(\widetilde{f}^{(m)}(\cos{\theta})\sin^{j}(\theta)\cos^{i-j}(\theta)\right)'\zeta_{m+1}(k\theta)d\theta \right]. \label{intbyparts_discont}
		\end{align}
		Using the continuity of $f^{(m)}$, the first term in \eqref{intbyparts_discont} becomes zero. Indeed, 
		\[ \sum_{\ell=1}^{n_d}\left(\widetilde{f}^{(m)}(\cos{\theta})\sin^{j}(\theta)\cos^{i-j}(\theta)\zeta_{m+1}(k\theta)\right)_{s_{\ell-1}}^{s_\ell} = \left(\widetilde{f}^{(m)}(\cos{\theta})\sin^{j}(\theta)\cos^{i-j}(\theta)\zeta_{m+1}(k\theta)\right)_{-\pi}^{\pi} = 0.\]
		Since the discontinuities of $\widetilde{f}^{(m+1)}$ are finitely many, the second term in \eqref{intbyparts_discont} can be simplified as 
		\begin{align*}
			\sum_{\ell=1}^{n_d} \int_{s_{\ell-1}}^{s_{\ell}} \left(\widetilde{f}^{(m)}(\cos{\theta})\sin^{j}(\theta)\cos^{i-j}(\theta)\right)'\zeta_{m+1}(k\theta)d\theta =  \int_{-\pi}^{\pi} \left(\widetilde{f}^{(m)}(\cos{\theta})\sin^{j}(\theta)\cos^{i-j}(\theta)\right)'\zeta_{m+1}(k\theta)d\theta.
		\end{align*}
		This implies that we obtained the same recursive relation in this case. Moreover, if $f\in C^{m+2}[a,b]$, it is easy to see that equation \eqref{eq_rec_reln} holds even for $0\leq i\leq m+1$.
	\end{proof}
	\noindent
	Now, using the Lemma~\ref{lemma_expression_Ck_induction} and  \eqref{Ck_int_by_parts_term}, $c_{k}$ in \eqref{Eq_Cheby_Coeff_intPart_one} can be re-expressed as $c_{k} = \frac{1}{2\pi} \frac{h}{k} I_{1}[f,(1,1,k)]$. Application of \Cref{lemma_expression_Ck_induction} yields
     \begin{equation}
         c_{k} = \frac{1}{2\pi} \frac{h}{k} I_{1}[f,(1,1,k)] = \frac{1}{2\pi}\frac{h}{k^2}I_{2}[f,(1,0,k)]- \frac{1}{4\pi}\frac{h^2}{k^2}I_{2}[f,(2,2,k)]
     \end{equation}
    %\begin{align*}        
	%	c_{k}&= \frac{h}{2k^2\pi}\int_{-\pi}^{\pi} \widetilde{f}'(\cos{\theta})\cos(\theta)\cos{(k\theta)}d\theta- \frac{h^2}{4k^2\pi}\int_{-\pi}^{\pi}\widetilde{f}''(\cos{\theta})\sin^2{(\theta)}\cos{(k\theta)} d\theta\\
	%	&= \frac{1}{2\pi}\frac{h}{k^2}I_{2}[f,(1,0,k)]- \frac{1}{4\pi}\frac{h^2}{k^2}I_{2}[f,(2,2,k)].
	%\end{align*}
By performing integration by parts iteratively $M$ times, we obtain the relation stated in Lemma~\ref{lemma_Cheby_Coeff_form}:
\begin{equation}\label{eq_Cheby_Coeff_form1}
			c_k = \sum_{i=1}^{M} \sum_{j=0}^{i} \frac{h^{i}}{k^M}\alpha_{ij}^{(M)}  I_{M}[f,(i,j,k)], \quad \text{for} \quad 1 \le M \le s \quad \text{where} \quad s = \begin{dcases}
                m+1, \text{ if } f\in X^m[a,b], \\
                m+2, \text{ if } f\in C^{m+2}[a,b].
            \end{dcases}
		\end{equation}
While estimating the decay rate of $c_{k}$ in terms of $k$ and $h$, we now have obtained the optimal decay in $k$ as in \eqref{eq_Cheby_Coeff_form1}.
We now proceed to analyze the corresponding optimal dependence on $h$. To do so, we re-write the integral \eqref{Ck_int_by_parts_term} as a function of $\mu$,
\begin{equation}\label{Eq_expression_gijkl}
		I_{M}[f,(i,j,k)](\mu) = \int_{-\pi}^{\pi} \widetilde{f}^{(i)}\left(\frac{\mu}{h}\cos{\theta}\right)\sin^{j}(\theta)\cos^{i-j}(\theta)\zeta_{M}(k\theta)d\theta,
	\end{equation}
 and investigate its behavior as $\mu \to 0$. Here $\mu\in [-h, h]$ and observe that $I_{M}[f,(i,j,k)](h) = I_{M}[f,(i,j,k)]$. In the following lemma we estimate a bound for \eqref{Eq_expression_gijkl}.  %(since $-1\leq \frac{\mu}{h}\cos{\theta} \leq 1$, we must have $\mu\in [-h, h]$).
	\begin{lemma}\label{lemma_Cheby_Coeff_Taylor_thm}
		For integers $0 \leq j\leq i \leq m$ with $i<k$ the following holds:  
        \begin{equation}
            I_M[f,(i,j,k)](\mu) = O\left(\mu^{\min\{k, s\}-i}\right) \quad \text{where} \quad s = \begin{dcases}
                m+1, \text{ if } f\in X^m[a,b], \\
                m+2, \text{ if } f\in C^{m+2}[a,b],
            \end{dcases}
        \end{equation}
		and $\mu \in [-h, h] \subseteq [-1,1]$.
	\end{lemma}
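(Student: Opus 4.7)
The strategy is to view $F(\mu):=I_M[f,(i,j,k)](\mu)$ as a smooth (or piecewise smooth) function of $\mu$, show that its first $P$ derivatives at $\mu=0$ vanish where $P:=\min\{k,s\}-i$, and then invoke Taylor's theorem with remainder. Note $P\ge 1$, since $i<k$ and $i\le m<s$ together give $\min\{k,s\}\ge i+1$.

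Differentiating \eqref{Eq_expression_gijkl} with respect to $\mu$ under the integral sign (valid for $0\le p\le s-i$) yields
\[
F^{(p)}(\mu) \;=\; \frac{1}{h^{p}}\int_{-\pi}^{\pi} \widetilde f^{(i+p)}\!\bigl(\tfrac{\mu}{h}\cos\theta\bigr)\,\cos^{p+i-j}\theta\,\sin^{j}\theta\,\zeta_{M}(k\theta)\,d\theta,
\]
so that at $\mu=0$,
\[
F^{(p)}(0) \;=\; \frac{\widetilde f^{(i+p)}(0)}{h^{p}}\int_{-\pi}^{\pi}\cos^{p+i-j}\theta\,\sin^{j}\theta\,\zeta_{M}(k\theta)\,d\theta.
\]
The key observation is that $\cos^{p+i-j}\theta\,\sin^{j}\theta$ is a trigonometric polynomial of degree at most $p+i$ (expand using product-to-sum formulae), i.e., a finite linear combination of $\cos(n\theta),\sin(n\theta)$ with $|n|\le p+i$. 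Since $\zeta_{M}(k\theta)\in\{\cos(k\theta),\sin(k\theta)\}$, Fourier orthogonality on $[-\pi,\pi]$ annihilates this integral whenever $p+i<k$. For every $p$ with $0\le p\le P-1$ we have $p+i\le\min\{k,s\}-1<k$, hence $F^{(p)}(0)=0$ for all such $p$.

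Taylor's theorem with integral remainder then gives $F(\mu)=\frac{1}{(P-1)!}\int_{0}^{\mu}(\mu-t)^{P-1}F^{(P)}(t)\,dt$. The chain-rule bound $\|\widetilde f^{(i+P)}\|_{L^{\infty}}\le(h/2)^{i+P}\|f^{(i+P)}\|_{L^{\infty}[a,b]}$ implies
\[
|F^{(P)}(t)| \;\le\; \frac{2\pi}{h^{P}}\,(h/2)^{i+P}\,\|f^{(i+P)}\|_{L^{\infty}[a,b]} \;=\; 2\pi\,\frac{h^{i}}{2^{i+P}}\,\|f^{(i+P)}\|_{L^{\infty}[a,b]},
\]
which is bounded independently of $\mu$ and, since $h\le 1$, by a constant depending only on $f$. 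Therefore $|F(\mu)|\le C|\mu|^{P}$, giving $F(\mu)=O(\mu^{\min\{k,s\}-i})$.

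The principal obstacle is the $f\in X^{m}$ case, where $s=m+1$ and $\widetilde f^{(m+1)}$ is merely piecewise continuous with the finitely many jump points $\{s_\ell\}$ from \eqref{eq:discont_pts}. There the top-order differentiation under the integral sign must be justified piecewise: decompose $[-\pi,\pi]=\bigcup_\ell[s_{\ell-1},s_{\ell}]$, apply the classical differentiation-under-integral theorem on each smooth subinterval, and sum, exactly as in the proof of Lemma~\ref{lemma_expression_Ck_induction}. The integral form of Taylor's remainder accommodates this cleanly, since it only requires $F^{(P)}\in L^{\infty}$, which follows from the piecewise $L^{\infty}$ bound on $\widetilde f^{(i+P)}$ across the finitely many pieces.
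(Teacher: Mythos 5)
Your proof is correct and follows essentially the same route as the paper's: differentiate under the integral in $\mu$, kill the low-order Taylor coefficients at $\mu=0$ using the vanishing of $\int_{-\pi}^{\pi}\sin^{j}\theta\,\cos^{q}\theta\,\zeta_M(k\theta)\,d\theta$ for $j+q<k$ (the paper's Lemma~\ref{Ap1}, which you reprove inline via product-to-sum expansion), and conclude by Taylor's theorem with remainder, treating the piecewise-$C^{m+2}$ case by splitting the integral at the jump points exactly as the paper does. A minor bonus of your version is that the chain-rule factor is correctly $1/h^{p}$ (the paper's \eqref{eq_derivative_of_integral} writes $1/2^{n}$), so your bookkeeping makes explicit that the implied constant is independent of $h$, which is what the application in Theorem~\ref{Thm_Cheby_Coeff} actually requires.
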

	\begin{proof}
    We first investigate the regularity of the integral $I_M[f,(i,j,k)](\mu)$ defined in \eqref{Eq_expression_gijkl} at $\mu=0$ and assume $f\in X^{m}[a,b]$. Using Leibniz rule we deduce that $I_{M}[f,(i,j,k)](\mu)$ is $(m-i)$ times differentiable. In particular the derivatives of $I_{M}[f,(i,j,k)](\mu)$ are given by 
		\begin{equation}\label{eq_derivative_of_integral}
			\frac{d^{n}}{d\mu^{n}}I_{M}[f,(i,j,k)](\mu) = \frac{1}{2^n}\int_{-\pi}^{\pi}\widetilde{f}^{(i+n)}\left(\frac{\mu}{h}\cos{\theta}\right)\sin^{j}(\theta)\cos^{i-j+n}(\theta)\zeta_{M}(k\theta)d\theta = \frac{1}{2^n}I_{M}[f,(i+n,j,k)](\mu),
		\end{equation}
		for $0 \le n \le m-i$. Since $f \in C_{pw}^{m+2}[a,b]$,  $(m+1-i)^{th}$ derivative of $I_{M}[f,(i,j,k)](\mu)$ exists. Indeed, the points of discontinuities occur at $\{s_\ell\}_{\ell=0}^{n_d}$ as given in~\eqref{eq:discont_pts}. Therefore, the $(m-i)$-th derivative can be written as
		\begin{equation}
			\frac{d^{m-i}}{d\mu^{m-i}}I_{M}[f,(i,j,k)](h) = \frac{1}{2^{m-i}}\sum_{q=1}^{N_d}\int_{s_{q-1}(\mu)}^{s_q(\mu)} \widetilde{f}^{m}\left(\frac{\mu}{h}\cos{\theta}\right)\sin^{j}(\theta)\cos^{m-j}(\theta)\zeta_{M}(k\theta)d\theta,
		\end{equation}
		and we can apply Leibniz rule again to show that $I_{M}[f,(i,j,k)](\mu)$ is $(m+1-i)$ times differentiable. Expanding $I_{M}[f,(i,j,k)](\mu)$ in a finite Taylor series about $\mu=0$, we deduce
		\begin{equation}\label{eq_Taylor_expansion_Xm+2}
			I_{M}[f,(i,j,k)](\mu) = \sum_{n = 0}^{m-i} I_{M}[f,(i+n,j,k)](0) \frac{\mu^{n}}{2^n n!} + O\left(\mu^{m+1-i}\right).
		\end{equation}
		We now estimate the right hand side of equation \eqref{eq_Taylor_expansion_Xm+2}, by employing Lemma~\ref{Ap1} in Appendix, for $0\le n \le m-i$ we obtain 
		\begin{equation}\label{eq_int_powsincos}
			I_{M}[f,(i+n,j,k)](0) = \widetilde{f}^{(i+n)}(0)\int_{-\pi}^{\pi} \sin^j{(\theta)}\cos^{i+n-j}{(\theta)}\zeta_M(k\theta) d\theta = 0,  
		\end{equation}
		if $0\le j \le i+n < k $ or $n<k-i$. Therefore, if $k \le m$, the first non-zero term of the summation in equation \eqref{eq_Taylor_expansion_Xm+2} occur at $n=k-i$ and we obtain 
        \begin{equation*}
        I_M[f,(i,j,k)](\mu) = O(\mu^{k-i}).    
        \end{equation*}
        If $k > m$, then every term in the summation of \eqref{eq_Taylor_expansion_Xm+2} becomes zero because of \eqref{eq_int_powsincos}, which implies 
        \begin{equation*}
            I_M[f,(i,j,k)](\mu) = O(\mu^{m+1-i}).    
        \end{equation*}
        Combining these two cases we get the desired result when $f\in X^{m}[a,b]$. For $f\in C^{m+2}[a,b]$, using Leibniz rule we deduce that $I_{M}[f,(i,j,k)](\mu)$ is $(m+2-i)$ times differentiable. In particular the derivatives of $I_{M}[f,(i,j,k)](\mu)$ are given as~\eqref{eq_derivative_of_integral} for $0 \le n \le m+2-i$. By employing a finite Taylor expansion of $I_{M}[f,(i,j,k)](\mu)$ at $\mu=0$, we have
		\begin{equation*}
			I_{M}[f,(i,j,k)](\mu) = \sum_{n = 0}^{m+1-i} I_{M}[f,(i+n,j,k)](0) \frac{\mu^{n}}{2^n n!} + O\left(\mu^{m+2-i}\right). 	
		\end{equation*}
		Now, following the similar arguments which we carried out for $f\in X^{m}[a,b]$, we obtain the desired bound
        \begin{equation*}
         I_{M}[f,(i,j,k)](\mu) = O(\mu^{k-i}) + O(\mu^{m+2-i})   
        \end{equation*}
     if $f\in C^{m+2}[a,b]$.
	\end{proof}

\begin{center}{\bf Proof of Theorem~\ref{Thm_Cheby_Coeff}}
\end{center}
It suffices to prove Theorem~\ref{Thm_Cheby_Coeff} for $\ell=1$, since the Chebyshev coefficients $c_k^{(2)},~c_k^{(3)},~c_k^{(4)}$ corresponding to second, third and fourth kind Chebyshev polynomials respectively, can be expressed in terms of $c_k^{(1)}$ using equations \eqref{eq:ck1-cont}-\eqref{eq:ck4-cont}. In particular, we have
	\begin{equation}\label{eq:ck2-using-ck1}
		c_k^{(2)} = \frac{1}{\pi} \int_{0}^{\pi} f(\cos{\theta})\cos{(k\theta)}d\theta-\frac{1}{\pi} \int_{0}^{\pi} f(\cos{\theta})\cos{((k+2)\theta)}d\theta = \frac{c_k^{(1)}}{\gamma_k} - \frac{c_{k+2}^{(1)}}{2},
	\end{equation}
	\begin{equation}\label{eq:ck3-using-ck1}
		c_k^{(3)} = \frac{1}{\pi} \int_{0}^{\pi} f(\cos{\theta})\cos{(k\theta)}d\theta + \frac{1}{\pi} \int_{0}^{\pi} f(\cos{\theta})\cos{((k+1)\theta)}d\theta  = \frac{c_k^{(1)}}{\gamma_k} + \frac{c_{k+1}^{(1)}}{2},
	\end{equation}
	\begin{equation}\label{eq:ck4-using-ck1}
		c_k^{(4)} = \frac{1}{\pi} \int_{0}^{\pi} f(\cos{\theta})\cos{(k\theta)}d\theta-\frac{1}{\pi} \int_{0}^{\pi} f(\cos{\theta})\cos{((k+1)\theta)}d\theta  = \frac{c_k^{(1)}}{\gamma_k} - \frac{c_{k+1}^{(1)}}{2}.
	\end{equation}
	% Using the equations \eqref{eq:ck2-using-ck1}-\eqref{eq:ck4-using-ck1} and  Theorem~\ref{Thm_Cheby_Coeff}, we obtain the following corollary summarizing the decay rates for $c_k^{(2)},c_k^{(3)},c_k^{(4)}$
For simplicity of notation we write $c_k$ in place of $c_k^{(1)}$ throughout the rest of the proof. If $f\in X^{m}[a,b]$, using~\eqref{eq_Cheby_Coeff_form1} for $M=m+1$, an expression for $c_{k}$ can be obtained as
	\begin{align}
		c_k &= \sum_{i=1}^{m+1} \sum_{j=0}^{i} \frac{h^{i}}{k^{m+1}}\alpha_{ij}^{(m+1)}  I_{m+1}[f,(i,j,k)]\\
		&= \sum_{i=1}^{m} \sum_{j=0}^{i} \alpha_{ij}^{(m+1)} \frac{h^{i}}{k^{m+1}} I_{m+1}[f,(i,j,k)]+\sum_{j=0}^{m+1} \alpha_{(m+1)j}^{(m+1)} \frac{h^{m+1}}{k^{m+1}} I_{m+1}[f,(m+1,j,k)].\label{Eq_Cheby_Coeff_IBP_m+1}
	\end{align}
	Additionally, in \Cref{lemma_Cheby_Coeff_form} we have shown that $I_{m+1}[f,(m+1,j,k)] =O\left(1/k\right)$. Using this bound, the second term in~\eqref{Eq_Cheby_Coeff_IBP_m+1} can be estimated as 
	\begin{equation*}
		\left|\sum_{j=0}^{m+1} \alpha_{(m+1)j}^{(m+1)} \frac{h^{m+1}}{k^{m+1}} I_{m+1}[f,(m+1,j,k)]\right| \le C\frac{h^{m+1}}{k^{m+2}}.
	\end{equation*}
	Using the recurrence relation for $I_{m+1}[f,(i,j,k)]$ described in \Cref{lemma_expression_Ck_induction}, the first sum in equation~\eqref{Eq_Cheby_Coeff_IBP_m+1} can be expressed as follows:
	\begin{align*} 
		c_k &= \sum_{i=1}^{m} \sum_{j=0}^{i} \alpha_{ij}^{(m+1)} \frac{h^{i}}{k^{m+1}} \left[ (-1)^{m}\frac{j}{k}I_{m+2}[f,(i,j-1,k)]-(-1)^m \frac{(i-j)}{k}I_{m+2}[f,(i,j+1,k)] \right. \\ &\quad \left.-\frac{(-1)^m}{2}\frac{h}{k}I_{m+2}[f,(i+1,j+1,k)]\right]\\
		&= \sum_{i=1}^{m} \frac{h^{i}}{k^{m+2}}\sum_{j=0}^{i} \alpha_{ij}^{(m+1)} \left[ (-1)^{m}j I_{m+2}[f,(i,j-1,k)]-(-1)^m (i-j)  I_{m+2}[f,(i,j+1,k)]\right. \\
		&\quad  \left. - \frac{(-1)^m}{2}h I_{m+2}[f,(i+1,j+1,k)]\right].
	\end{align*}
	Thus, using \Cref{lemma_Cheby_Coeff_Taylor_thm} and observing the fact $I_{M}[f,(i,j,k)](h) = I_{M}[f,(i,j,k)]$. we obtain 
	\begin{align*}
			\left|I_{m+2}[f,(i,j-1,k)]\right| &\le  C \, h^{\min\{k,m+1\}-i}, \\
			\left|I_{m+2}[f,(i,j+1,k)]\right| &\le C\, h^{\min\{k,m+1\}-i}, \\
			\left|I_{m+2}[f,(i+1,j+1,k)]\right| &\le C \, h^{\min\{k,m+1\}-(i+1)}.
	\end{align*}
	Since the constants in the above summations are dependent only on $i$, $j$ and $m$, and independent of $h$ and $k$ we obtain the required result. Similarly, if $f\in C^{m+2}[a,b]$, using~\eqref{Eq_Cheby_Coeff_IBP_m+1} for $M=m+2$, $k$-th coefficient $c_{k}$ can be expressed as
	\begin{equation}\label{eq_Cheby_coeff_uCm+2}
		c_k = \sum_{i=1}^{m+2} \sum_{j=0}^{i} \frac{h^{i}}{k^{m+2}}\alpha_{ij}^{(m+2)}  I_{m+2}[f,(i,j,k)].
	\end{equation}
	Using \Cref{lemma_Cheby_Coeff_Taylor_thm} and the fact $I_{m+2}[f,(i,j,k)] = I_{m+2}[f,(i,j,k)](h)$, we obtain
	\begin{equation}
		\left|I_{m+2}[f,(i,j,k)]\right| \le C \, h^{\min\{k,m+2\}-i},
	\end{equation}
	and since $\alpha_{ij}^{m+2}$ in equation \eqref{eq_Cheby_coeff_uCm+2} depend only on $i$, $j$ and $m$, the required bound for $c_{k}$ is attained when $f\in C^{m+2}[a,b]$. Therefore, we have 
    \begin{equation}\label{eq:Cheby_bound1}
            |c_{k}| \le C\dfrac{h^{\min\{k,s\}}}{k^{m+2}} \quad \text{where} \quad s = \begin{dcases}
                m+1, \text{ if } f\in X^m[a,b], \\
                m+2, \text{ if } f\in C^{m+2}[a,b].
            \end{dcases}
    \end{equation}

    \section{Applications}\label{sec:appln}
	In this section, we present the application of Theorem~\ref{Thm_Cheby_Coeff} which characterizes the decay of Chebyshev coefficients of certain class of functions. Specifically, we present three applications: (i) error estimates for approximating continuous Chebyshev coefficients~\eqref{eq:ck1-cont}-\eqref{eq:ck4-cont} by their discrete counterparts presented in Table~\ref{T:quadratures}, (ii) decay rates of the discrete Chebyshev coefficients, and (iii) error bounds for the interpolatory quadratures introduced in Definition~\ref{sec:prelim}.
    
	As stated at the end of Section~\ref{sec:prelim}, denote the symbol $C$ for a positive constant taking on different values on different occurrences. We now introduce the notations for the discrete Chebyshev coefficients and the interpolatory quadratures based on Chebyshev polynomials introduced in Definition~\ref{def:quad} and Table~\ref{T:quadratures}. For $f\in C[-1,1]$, let
	\[\widetilde{c}_{k}^{f1},~ \widetilde{c}_{k}^{cc},~\widetilde{c}_{k}^{f2},~
	\widetilde{c}_{k}^{f3},~\widetilde{c}_{k}^{f4},\qquad \text{and} \qquad P_n^{f1}[f],~ P_n^{cc}[f],~P_n^{f2}[f],~
	P_n^{f3}[f],~P_n^{f4}[f]\] 
	denote the discrete Chebyshev 
	coefficients and the interpolating polynomials corresponding to the nodes of the Fej\'er first (F-I), 
	Clenshaw--Curtis (CC), Fej\'er second (F-II), Fej\'er third (F-III),and Fej\'er fourth (F-IV) quadratures, respectively. The discrete coefficients are defined in Table~\ref{T:quadratures} and the interpolating polynomials are given by
    \begin{equation}\label{eq:inter_polygen}
\begin{aligned}
P_n^{f1}[f](t) = \sum_{k=0}^{n-1}\widetilde{c}_{k}^{f1}T_k(t), 
\quad P_n^{cc}[f](t) = &\sum_{k=0}^{n-1}\widetilde{c}_{k}^{cc}T_k(t), 
\quad P_n^{f2}[f](t) = \sum_{k=0}^{n-1}\widetilde{c}_{k}^{f2}U_k(t), \\[6pt]
P_n^{f3}[f](t) = \sum_{k=0}^{n-1}\widetilde{c}_{k}^{f3}V_k(t), \quad &\text{and}
\quad P_n^{f4}[f](t) = \sum_{k=0}^{n-1}\widetilde{c}_{k}^{f4}W_k(t),
\end{aligned}
\end{equation}
	respectively. Similarly, let 
	\[Q_{n}^{f1}[f],~Q_{n}^{cc}[f],~Q_{n}^{f2}[f],~Q_{n}^{f3}[f],~Q_{n}^{f4}[f]\] 
	denote the $n$-point F-I, CC, F-II, 
	F-III, and F-IV quadrature approximations (as defined in~\eqref{eq:quad}) 
	of the integral of a function $f$ over an interval $[a,b]$, 
	with the corresponding quadrature weights and nodes listed in 
	Table~\ref{T:quadratures}. 
	For convenience, we also use the compact notation 
    \begin{equation}\label{eq:Qnq_not}
        Q_{n}^{q}[f], \quad  \text{where} \quad q \in \{f1, cc, f2, f3, f4\}.
    \end{equation}
    In view of these notations, the following two theorems present error estimates for the approximation of continuous Chebyshev coefficients by their discrete counterparts.
	
%     \begin{theorem}\label{thm:error_discrete_and_continuous_ChebyCoeff_Tn}
% 		Let $k$ be an integer such that $0 \le k \le n-1$ and $c_k^{(i)}$ ($i=1,2,3,4$) denote the $k$-th Chebyshev coefficient of $f$ as defined in~\eqref{eq:ck1-cont}-\eqref{eq:ck4-cont}, then the following hold:
% 		\begin{enumerate}
% 			\item[i.] If $f \in X^m[a,b]$, then 
% 			\begin{equation}
% 				|c_k^{(i)} - \widetilde{c}_k^{fi}| \le C \frac{h^{\min\{n+1,m+1\}}}{n^{m+2}}.
% 			\end{equation}
% 			\item[ii.] If $f \in C^{m+2}[a,b]$, then 
% 			\begin{equation}
% 				|c_k^{(i)} - \widetilde{c}_k^{fi}| \le C \frac{h^{\min\{n+1,m+2\}}}{n^{m+2}}.
% 			\end{equation}
% 		\end{enumerate}
% 		where $h = b-a \le 1$, $n$ is the number of terms in the discrete Chebyshev expansion $P_n^{fi}[f]$ as given in \eqref{eq:inter_polygen}. 
% 	\end{theorem}

%     \begin{theorem}\label{thm:error_discrete_and_continuous_ChebyCoeff_Tn}
% Let $k$ be an integer with $0 \leq k \leq n-1$, and let $c_k^{(i)}$ ($i=1,2,3,4$) denote the $k$-th Chebyshev coefficient of $f$, as defined in~\eqref{eq:ck1-cont}--\eqref{eq:ck4-cont}. Then
% \begin{equation}
%     \bigl|c_k^{(i)} - \widetilde{c}_k^{\,fi}\bigr| \;\le\; 
%     C \, \frac{h^{\min\{n+1,\,s\}}}{n^{m+2}},
% \end{equation}
% where $h = b-a \leq 1$, $n$ is the number of terms in the discrete Chebyshev expansion $P_n^{fi}[f]$ from~\eqref{eq:inter_polygen}, and 
% \[
%     s = 
%     \begin{cases}
%       m+1, & \text{if } f \in X^m[a,b], \\[4pt]
%       m+2, & \text{if } f \in C^{m+2}[a,b].
%     \end{cases}
% \]
% \end{theorem}
% 

\begin{theorem}\label{thm:error_discrete_and_continuous_ChebyCoeff_Tn}
Let $k$ be an integer with $0 \leq k \leq n-1$, and let $c_k^{(\ell)}$ ($\ell=1,2,3,4$) denote the $k$-th Chebyshev coefficient of $f$, as defined in~\eqref{eq:ck1-cont}--\eqref{eq:ck4-cont}. Then
\begin{equation}
    \bigl|c_k^{(\ell)} - \widetilde{c}_k^{\,fi}\bigr| \;\le\; 
    C \, \frac{h^{\min\{n+1,\,s\}}}{n^{m+2}}, \quad \text{where} \quad s = \begin{dcases}
                m+1, \text{ if } f\in X^m[a,b], \\
                m+2, \text{ if } f\in C^{m+2}[a,b],
            \end{dcases}
\end{equation}
$h = b-a \leq 1$ and $n$ is the number of terms in the discrete Chebyshev expansion $P_n^{fi}[f]$ given in~\eqref{eq:inter_polygen}.
\end{theorem}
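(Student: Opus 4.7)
The plan is to exploit the classical aliasing phenomenon for discrete Chebyshev expansions: when $f$ is sampled at the $n$ quadrature nodes, every continuous coefficient $c_m^{(\ell)}$ with index $m \ge n$ folds back onto a lower-order discrete coefficient. Concretely, for each pairing of a Chebyshev family with its associated Fej\'er quadrature, I would first derive an identity of the form
\begin{equation*}
    \widetilde{c}_k^{\,fi} \;=\; c_k^{(\ell)} \;+\; \sum_{j=1}^{\infty} \varepsilon_{j}^{\pm}\bigl(c_{\,2jn-k}^{(\ell)} + c_{\,2jn+k}^{(\ell)}\bigr),
\end{equation*}
where $\varepsilon_j^{\pm} \in \{+1,-1\}$ depends on the kind and the quadrature. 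For the first kind at the F-I nodes $\theta_j = (2j+1)\pi/(2n)$, this is the standard aliasing formula obtained from the product-to-sum identity $\cos(m\theta_j)\cos(k\theta_j) = \tfrac{1}{2}[\cos((m+k)\theta_j) + \cos((m-k)\theta_j)]$ and the evaluation of finite trigonometric sums over the nodes; analogous identities for F-II, F-III, F-IV arise by expanding $U_m U_k(1-t_j^2)$, $V_m V_k(1+t_j)$, $W_m W_k(1-t_j)$ into linear combinations of cosines at the appropriate nodes. The crucial feature common to all four cases is that every aliased index $m$ in the tail satisfies $m \ge n+1$.

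With the aliasing identity in place, the second step is to bound each tail term using Theorem~\ref{Thm_Cheby_Coeff}. Since $h \le 1$ and the map $m \mapsto \min\{m,s\}$ is non-decreasing, for every $m \ge n+1$ one has
\begin{equation*}
    h^{\min\{m,s\}} \;\le\; h^{\min\{n+1,s\}},
\end{equation*}
so that $|c_m^{(\ell)}| \le C\, h^{\min\{n+1,s\}}/m^{m+2}$. Bounding the aliased indices from below by $2jn \pm k \ge (2j-1)n + 1 \ge jn$ (using $k \le n-1$), the tail then telescopes into a convergent $p$-series:
\begin{equation*}
    \sum_{j=1}^{\infty}\bigl(|c_{2jn-k}^{(\ell)}| + |c_{2jn+k}^{(\ell)}|\bigr) \;\le\; C\,\frac{h^{\min\{n+1,s\}}}{n^{m+2}}\sum_{j=1}^{\infty}\frac{1}{j^{m+2}} \;\le\; C\,\frac{h^{\min\{n+1,s\}}}{n^{m+2}},
\end{equation*}
which is exactly the bound claimed in the theorem.

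The main technical obstacle I anticipate is the uniform derivation of the four aliasing identities, since the trigonometric manipulations for $V_k$ and $W_k$ at the shifted nodes $\theta_j = (2j+1)\pi/(2n+1)$ and $(2j+2)\pi/(2n+1)$ are more delicate than the classical F-I computation and each requires careful tracking of sign factors from the weight $(1\pm t_j)$. Nonetheless, each identity ultimately reduces to evaluating a finite geometric sum of roots of unity and can be handled case by case; for the second, third, and fourth kinds the bounds on the aliased terms may alternatively be obtained by combining the F-I decay with the inter-kind relations \eqref{eq:ck2-using-ck1}--\eqref{eq:ck4-using-ck1}. Once the four aliasing identities are established, the monotonicity argument and the tail summation above apply uniformly across all pairs $(\ell,fi)$ and yield the stated bound with a single constant $C$ independent of $k$, $n$, and $h$.
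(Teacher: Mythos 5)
Your proposal is correct and follows essentially the same route as the paper: the paper obtains the same aliasing identity by writing $\widetilde{f}$ as its degree-$(n-1)$ partial sum plus a tail $R_n$ and applying the discrete orthogonality relations of Lemma~\ref{lem:dis_otho_gen} (which are themselves proved by exactly the product-to-sum and geometric-sum computations you describe), then bounds the aliased coefficients via Theorem~\ref{Thm_Cheby_Coeff} using $2nl-k\ge n+1$ and $h\le 1$ to factor out $h^{\min\{n+1,s\}}$, just as you do. The only caveat is that your generic aliasing formula with indices $2jn\pm k$ should be adjusted to the moduli $2(n+1)$ and $2n+1$ for F-II and F-III/F-IV respectively (cf.~\eqref{lem:dis_otho_Un}--\eqref{lem:dis_otho_Wn}), but the essential property that all aliased indices are at least $n+1$ persists, so your tail estimate goes through unchanged.
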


	\begin{proof}
			We prove the theorem for $\ell=1$ and for rest of the cases the proof follows analogously. We first consider the case $f\in X^m[a,b]$. Using the notation $\widetilde{f}$ as in \eqref{eq:util_def}, the Chebyshev series expansion of $f$ is given by
		\begin{equation}\label{eq:util_with_Rn_Tn}
			\widetilde{f}(t) =  \sum_{k=0}^{\infty} c_k^{(1)} T_k(t) =\sum_{k=0}^{n-1} c_k^{(1)} T_k(t) + R_n(t), \quad \text{where} \quad R_n(t) = \sum_{k=n}^{\infty} c_k^{(1)} T_k(t).
		\end{equation}
		From~\eqref{eq:util_with_Rn_Tn} it is clear that $(\widetilde{f}-R_n)(t)$ is a polynomial with degree at most $n-1$ implying that discrete and continuous Chebyshev coefficients of $\widetilde{f}-R_n$ are equal. We obtain
		\begin{equation}
			(\widetilde{f}-R_n)(t) = \sum_{k=0}^{n-1} c_k^{(1)} T_k(t), \quad \text{where} \quad c_k^{(1)}= \frac{\gamma_k}{n} \sum_{j=0}^{n-1} (\widetilde{f}-R_n)(t_j)T_k(t_j),
		\end{equation}
		and $t_j = \cos{\left(\frac{2j+1}{2n}\pi\right)}$, $0 \le j \le n-1$.
		The error in the continuous and the discrete Chebyshev coefficients can be expressed as
		\begin{equation}\label{eq:ck-cktilde}
			c_{k}^{(1)}-\widetilde{c}_{k}^{f1} =\frac{\gamma_{k}}{n} \sum\limits_{j=0}^{n-1}\left(\widetilde{f}-R_{n}\right)\left(t_{j}\right) T_{k}\left(t_{j}\right)-\frac{\gamma_{k}}{n} \sum\limits_{j=0}^{n-1} \widetilde{f}\left(t_{j}\right) T_{k}\left(t_{j}\right) =-\frac{\gamma_{k}}{n} \sum_{j=0}^{n-1} R_{n}\left(t_{j}\right) T_{k}\left(t_{j}\right).    
		\end{equation}
		Since the Chebyshev series is absolutely and uniformly convergent, we interchange the infinite summation in~\eqref{eq:ck-cktilde} to obtain
		\begin{equation}\label{eq:apply_dis_otho}
			\widetilde{c}_{k}^{f1}-c_{k}^{(1)}=\frac{\gamma_{k}}{n} \sum_{j=0}^{n-1}\left[\sum_{i=n}^{\infty} c_{i}^{(1)} T_{i}\left(t_{j}\right)\right] T_{k}\left(t_{j}\right) = \frac{\gamma_{k}}{n} \sum_{i=n}^{\infty} c_{i}^{(1)}\left[\sum_{j=0}^{n-1} T_{i}\left(t_{j}\right) T_{k}\left(t_{j}\right)\right].
		\end{equation}
		Using the discrete orthogonality relation~\eqref{eq:dis_otho_FFQ} from Lemma~\ref{lem:dis_otho_gen}, the finite sum in the rightmost equality of~\eqref{eq:apply_dis_otho} simplifies, and the error term can be expressed as
		\begin{equation}   
			\widetilde{c}_{k}^{f1}-c_{k}^{(1)} = \frac{\gamma_{k}}{2}\left[\sum_{l=1}^{\infty} (-1)^{l} c^{(1)}_{2 n l-k}+\sum_{l=1}^{\infty}(-1)^{l} c^{(1)}_{2nl+k}\right].
		\end{equation} 
		For every $0\leq k < n$, $|\gamma_{k}|\leq 2$ and using Theorem~\ref{Thm_Cheby_Coeff}, we deduce
		\begin{equation}\label{eq:crude_bound_Tn}
			\left|c_{k}^{(1)}-\widetilde{c}_{k}^{f1}\right| \leq C \sum_{l=1}^{\infty}\left[\frac{h^{\min\{2nl-k,m+1\}}}{(2 nl-k)^{m+2}}+\frac{h^{\min\{2nl+k,m+1\}}}{(2 n l+k)^{m+2}}\right],
		\end{equation}
		where $C$ is a positive constant independent of $h$ and $n$. For $l \ge 1$ and $0 \le k\le n-1$ , $2nl-k \ge n+1$ and $2nl+k \ge 2n$, therefore, the required power of $h$, which is $h^{\min\{n+1,m+1\}}$ can be factored out of the sum given on the right hand side of the inequality~\eqref{eq:crude_bound_Tn}, and the rest of the higher powers of $h$ can be bounded since $h\le 1$. Thus, we obtain
		\begin{equation}
			\left|c_{k}^{(1)}-\widetilde{c}_{k}^{f1}\right| \leq C\frac{h^{\min\{n+1,m+1\}}}{n^{m+2}} \sum_{l=1}^{\infty}\left[\frac{1}{\left(2l-\frac{k}{n}\right)^{m+2}}+\frac{1}{\left(2l+\frac{k}{n}\right)^{m+2}}\right].
		\end{equation}
		For $0 \leq k \leq n-1$, the terms $1/\left(2l -k/n\right)^{m+2}$ and $1/\left(2l +k/n\right)^{m+2}$ are bounded by $1/(2l-1)^{m+2}$ and $1/(2l)^{m+2}$ respectively. Thus, the infinite series is bounded and the result follows. The proof when $f\in C^{m+2}$ can be carried out in a similar manner.
	\end{proof}

    %==============
	\begin{theorem}\label{thm:error_discrete_and_continuous_ChebyCoeff_CCQ}
		Let $k$ be an integer such that $0 \le k \le n-1$ and $c_k^{(1)}$ denote the $k$-th Chebyshev coefficient of $f$ as defined in~\eqref{eq:ck1-cont}, then the following holds:
        \begin{equation}
            |c_k^{(1)} - \widetilde{c}_k^{cc}| \le \frac{C}{n^{m+2}}\begin{dcases} 
					h^{\min\{n+1,s\}}&, \text{ if } k\ne n-2, \\
					h^{\min\{n,s\}}&, \text{ if } k= n-2,
				\end{dcases} \quad \text{where} \quad s = \begin{dcases}
                m+1, \text{ if } f\in X^m[a,b], \\
                m+2, \text{ if } f\in C^{m+2}[a,b],
            \end{dcases}
        \end{equation}
		$h = b-a \le 1$, and $n\ge 2$ is the number of terms in the discrete Chebyshev expansion $P_n^{cc}[f]$ given in~\eqref{eq:inter_polygen}.
	\end{theorem}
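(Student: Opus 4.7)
The plan is to follow the argument of Theorem~\ref{thm:error_discrete_and_continuous_ChebyCoeff_Tn} closely, adapting it to the Clenshaw--Curtis grid $t_j=\cos(j\pi/(n-1))$, $j=0,\dots,n-1$, and to its discrete orthogonality relation weighted by $1/\widetilde{\gamma}_j$ at the endpoints. With $\widetilde{f}$ as in~\eqref{eq:util_def}, I would first split
\begin{equation*}
    \widetilde{f}(t) = \sum_{k=0}^{n-1} c_k^{(1)} T_k(t) + R_n(t), \qquad R_n(t) = \sum_{i=n}^{\infty} c_i^{(1)} T_i(t),
\end{equation*}
and observe that the truncation is a polynomial of degree at most $n-1$, which the CC interpolant reproduces exactly; its discrete CC coefficients therefore coincide with its continuous ones. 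Consequently,
\begin{equation*}
    \widetilde{c}_k^{cc} - c_k^{(1)} = \frac{2}{(n-1)\widetilde{\gamma}_k}\sum_{j=0}^{n-1} \frac{1}{\widetilde{\gamma}_j}\, R_n(t_j)\, T_k(t_j).
\end{equation*}

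Next, interchanging the infinite and finite summations (justified by the uniform convergence of the Chebyshev series, which follows from Theorem~\ref{Thm_Cheby_Coeff}) and invoking the CC analogue of the discrete orthogonality relation used in the F-I proof --- namely, $\sum_{j=0}^{n-1}(1/\widetilde{\gamma}_j)\,T_i(t_j)\,T_k(t_j) = (n-1)\widetilde{\gamma}_k/2$ precisely when $i\equiv\pm k\pmod{2(n-1)}$, and zero otherwise --- collapses the expression. Because $T_{2(n-1)l\pm k}(t_j)=T_k(t_j)$ identically on the CC grid, no alternating signs appear (in contrast with the Fej\'er-first case), and the error reduces to
\begin{equation*}
    \widetilde{c}_k^{cc} - c_k^{(1)} = \sum_{l=1}^{\infty} c^{(1)}_{2(n-1)l+k} + \sum_{\substack{l\ge 1 \\ 2(n-1)l-k\ge n}} c^{(1)}_{2(n-1)l-k}.
\end{equation*}

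The third step is to identify the smallest aliased index appearing on the right-hand side, which governs the leading power of $h$. For $k=n-2$, the term with $l=1$ in the second sum contributes $c_n^{(1)}$, yielding the weaker bound $h^{\min\{n,s\}}$; for $k=n-1$, the corresponding $l=1$ term is $c_{n-1}^{(1)}$, which lies below the remainder range and is excluded, leaving the smallest admissible index equal to $3(n-1)\ge n+1$; for $0\le k\le n-3$, the smallest admissible index is $2(n-1)-k\ge n+1$. Bounding each $|c_i^{(1)}|$ via Theorem~\ref{Thm_Cheby_Coeff} as $C\,h^{\min\{i,s\}}/i^{m+2}$, factoring out the governing power of $h$ (using $h\le 1$ to absorb the higher powers of $h$ appearing in the tail), and dominating the remaining numerical series by terms of the form $1/(2l\pm k/(n-1))^{m+2}$ (which are in turn majorized by a $p$-series with $p=m+2$) produces the asserted estimate with a constant uniform in $h$, $k$, and $n$.

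The main difficulty is the separate treatment of the exceptional index $k=n-2$, which loses one power of $h$ relative to the generic case, together with the bookkeeping associated with the CC aliasing --- its period is $2(n-1)$ rather than $2n$, and the modified endpoint weights $1/\widetilde{\gamma}_j$ must be tracked through the orthogonality relation. Once these details are in place, the argument for $f\in C^{m+2}[a,b]$ proceeds identically, with the Theorem~\ref{Thm_Cheby_Coeff} exponent $s=m+2$ substituted throughout.
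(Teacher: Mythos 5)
Your argument is correct and is essentially the route the paper intends: it mirrors the proof of Theorem~\ref{thm:error_discrete_and_continuous_ChebyCoeff_Tn} verbatim, replacing the F-I orthogonality relation by the Clenshaw--Curtis relation~\eqref{lem:dis_otho_CC} from the appendix, and your aliasing bookkeeping (smallest admissible index $2(n-1)-k$ for $k\le n-3$, the excluded index $n-1$ for $k=n-1$, and the exceptional index $n$ arising from $l=1$ when $k=n-2$) correctly reproduces both branches of the stated bound. The only point worth double-checking is the normalization at the edge indices $k=0$ and $k=n-1$, where the two congruence classes $i\equiv\pm k$ coincide and the factor $\widetilde{\gamma}_k=2$ in~\eqref{lem:dis_otho_CC} accounts for the double counting; this affects only the constant $C$ and not the powers of $h$ or $n$.
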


	 In the following corollary, we present the decay rates for the discrete Chebyshev coefficients. 
	\begin{corollary}\label{cor:disCheby_Coeff_Tn}
		Let $n$ be a positive integer and $k$ be an integer such that $0 \le k \le n-1$. Then
        \begin{equation}
            \left|\widetilde{c}_{k}\right| \le C \frac{h^{\min\{k,s\}}}{k^{m+2}}, \quad \text{where} \quad s = \begin{dcases}
                m+1, \text{ if } f\in X^m[a,b], \\
                m+2, \text{ if } f\in C^{m+2}[a,b],
            \end{dcases}
        \end{equation}
		
		$\widetilde{c}_{k}$ is any of the $\widetilde{c}_{k}^{f1},\widetilde{c}_{k}^{cc},\widetilde{c}_{k}^{f2},\widetilde{c}_{k}^{f3},\widetilde{c}_{k}^{f4}$ and $h = b-a \le 1$. 
	\end{corollary}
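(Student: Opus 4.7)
The proof is essentially immediate once Theorem~\ref{Thm_Cheby_Coeff}, Theorem~\ref{thm:error_discrete_and_continuous_ChebyCoeff_Tn}, and Theorem~\ref{thm:error_discrete_and_continuous_ChebyCoeff_CCQ} are in hand. The plan is to apply the triangle inequality
\[
|\widetilde{c}_k| \;\le\; \bigl|\widetilde{c}_k - c_k^{(\ell)}\bigr| + \bigl|c_k^{(\ell)}\bigr|,
\]
where the index $\ell \in \{1,2,3,4\}$ matches the Chebyshev family associated with the particular discrete coefficient under consideration (so $\ell=1$ for both $\widetilde{c}_k^{f1}$ and $\widetilde{c}_k^{cc}$, $\ell=2$ for $\widetilde{c}_k^{f2}$, $\ell=3$ for $\widetilde{c}_k^{f3}$, and $\ell=4$ for $\widetilde{c}_k^{f4}$), and then to bound the two summands separately by quantities dominated by $C\,h^{\min\{k,s\}}/k^{m+2}$.

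The second summand is handled directly by Theorem~\ref{Thm_Cheby_Coeff}, which already gives the target bound $|c_k^{(\ell)}| \le C\,h^{\min\{k,s\}}/k^{m+2}$. For the first summand, the F-I, F-II, F-III, and F-IV cases are controlled by Theorem~\ref{thm:error_discrete_and_continuous_ChebyCoeff_Tn}, which yields $|\widetilde{c}_k - c_k^{(\ell)}| \le C\,h^{\min\{n+1,s\}}/n^{m+2}$. The CC case is handled by Theorem~\ref{thm:error_discrete_and_continuous_ChebyCoeff_CCQ}, which gives the same bound except in the exceptional slot $k=n-2$, where the $h$-power drops to $h^{\min\{n,s\}}$.

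It then remains to show that $h^{\min\{n+1,s\}}/n^{m+2}$, and in the exceptional CC case $h^{\min\{n,s\}}/n^{m+2}$, is majorized by $h^{\min\{k,s\}}/k^{m+2}$ for $1 \le k \le n-1$. This rests on two monotonicities: first, $k \le n-1 < n$ implies $1/n^{m+2} \le 1/k^{m+2}$; second, because $h \le 1$, $h^p$ is non-increasing in $p$, and since $n+1 > k$ (respectively $n > k$ in the exceptional CC slot at $k=n-2$), one has $\min\{n+1,s\} \ge \min\{k,s\}$ (respectively $\min\{n,s\} \ge \min\{k,s\}$), giving $h^{\min\{n+1,s\}} \le h^{\min\{k,s\}}$. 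Combining these two monotonicities, both summands are bounded by $C\,h^{\min\{k,s\}}/k^{m+2}$, and adding them yields the corollary.

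There is no serious obstacle here; the only subtlety is to keep track of the CC exception at $k=n-2$, where the dominance argument uses $n \ge k+2 > k$ in place of $n+1 > k$, but the conclusion is identical. The statement is understood with the implicit hypothesis $k \ge 1$ (inherited from Theorem~\ref{Thm_Cheby_Coeff}), so that $1/k^{m+2}$ is finite; the boundedness of $\widetilde{c}_0$ in terms of $\|f\|_\infty$ is separate and follows trivially from the quadrature formulas in Table~\ref{T:quadratures}.
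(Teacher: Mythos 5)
Your proof is correct and follows exactly the paper's argument: the triangle inequality $|\widetilde{c}_k|\le|c_k-\widetilde{c}_k|+|c_k|$ combined with Theorem~\ref{Thm_Cheby_Coeff} and Theorems~\ref{thm:error_discrete_and_continuous_ChebyCoeff_Tn}--\ref{thm:error_discrete_and_continuous_ChebyCoeff_CCQ}. You simply spell out the monotonicity step ($h\le 1$ and $k<n$ imply the error term is dominated by the target bound) and the $k=n-2$ and $k=0$ edge cases, which the paper's one-line proof leaves implicit.
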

	\begin{proof}
	  Using the triangle inequality, 
\begin{equation}\label{eq:triangle}
    |\widetilde{c}_k| \leq |c_k - \widetilde{c}_k| + |c_k|,
\end{equation}
and combining the decay of $c_k$ established in Theorem~\ref{Thm_Cheby_Coeff} with the bounds for $|c_k - \widetilde{c}_k|$ from Theorems~\ref{thm:error_discrete_and_continuous_ChebyCoeff_Tn}--\ref{thm:error_discrete_and_continuous_ChebyCoeff_CCQ}, we obtain the desired decay rates for the discrete coefficients $\widetilde{c}_k$.
\end{proof}

As an application of Theorems~\ref{thm:error_discrete_and_continuous_ChebyCoeff_Tn}--\ref{thm:error_discrete_and_continuous_ChebyCoeff_CCQ}, and making use of the interpolatory nature of the quadratures F-I, CC, F-II, F-III, and F-IV (see Table~\ref{T:quadratures}), we establish the following convergence result for the approximation of integral of a function $f$ on the interval $[a,b]$.

	\begin{theorem}\label{thm:quad_convergence_gen}
		Let $n$ be a positive integer, and let $Q_n^q[f]$ denote the $n$-point quadrature rule of type $q \in \{f1,cc,f2,f3,f4\}$ as defined in~\eqref{eq:Qnq_not}. Let $h = b-a \leq 1$ denote the length of the interval $[a,b]$. Then
		\begin{equation}
			\left|\int_{a}^{b} f(y)dy - Q^q_{n}[f] \right| \le \frac{C}{n^{m+2}}\begin{dcases} h^{\min\{n+1+n_0,s+1\}},& \text{ if } q \in\{ f1,cc\},\\[6pt]
			\log{(n)} h^{\min\{n+1+n_0,s+1\}},&\text{ if } q = f2,\\[6pt]
					\log{(n)} h^{\min\{n+1,s+1\}},&\text{ if } q \in\{ f3,f4\}
			\end{dcases}
        \end{equation}
            where
        \begin{equation}
             s = \begin{dcases}
                m+1, \text{ if } f\in X^m[a,b], \\[2pt]
                m+2, \text{ if } f\in C^{m+2}[a,b]
            \end{dcases}, \quad \text{and} \quad
            n_0 = \begin{dcases}
                1, \text{ if } $n$ \text{ is odd},\\[2pt]
                0, \text{ otherwise}.
            \end{dcases}
		\end{equation}
	\end{theorem}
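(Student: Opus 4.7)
The plan is to exploit the interpolatory nature of each quadrature $Q_n^q$ in order to reduce the quadrature error to a weighted sum of continuous Chebyshev coefficients, to which Theorems~\ref{Thm_Cheby_Coeff}, \ref{thm:error_discrete_and_continuous_ChebyCoeff_Tn}, and \ref{thm:error_discrete_and_continuous_ChebyCoeff_CCQ} apply directly. First, after the change of variables $y=\xi(t)=a+\tfrac{h}{2}(t+1)$, the exact integral becomes $\int_a^b f(y)\,dy=\tfrac{h}{2}\int_{-1}^1\widetilde{f}(t)\,dt$; since $w_j=\int_{-1}^1 L_j(t)\,dt$ and the interpolating polynomial $P_n^q[\widetilde{f}]$ agrees with $\widetilde{f}$ on the nodes, one has $Q_n^q[f]=\tfrac{h}{2}\int_{-1}^1 P_n^q[\widetilde{f}](t)\,dt$. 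Hence
\begin{equation*}
\int_a^b f(y)\,dy - Q_n^q[f]\;=\;\tfrac{h}{2}\int_{-1}^1\bigl(\widetilde{f}(t)-P_n^q[\widetilde{f}](t)\bigr)\,dt.
\end{equation*}

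Next, I would expand $\widetilde{f}$ in the Chebyshev family $\{\Phi_k\}\in\{\{T_k\},\{U_k\},\{V_k\},\{W_k\}\}$ associated with $q$, and use \eqref{eq:inter_polygen} to split the integrand into an aliasing part and a tail part:
\begin{equation*}
\widetilde{f}(t)-P_n^q[\widetilde{f}](t)=\sum_{k=0}^{n-1}\bigl(c_k^{(\ell)}-\widetilde{c}_k^{\,q}\bigr)\Phi_k(t)+\sum_{k=n}^{\infty}c_k^{(\ell)}\Phi_k(t).
\end{equation*}
Integrating term by term reduces everything to the unweighted moments $m_k^\Phi:=\int_{-1}^1\Phi_k(t)\,dt$. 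A direct computation via $t=\cos\theta$ shows that $m_k^T$ and $m_k^U$ vanish for odd $k$ and equal $-2/(k^2-1)$ and $2/(k+1)$ respectively for even $k$, while $m_k^V$ and $m_k^W$ are of size $O(1/k)$ for \emph{every} index $k\ge 1$, with no parity restriction. The parity dichotomy for $T_k$ and $U_k$ is precisely what generates the correction $n_0$ for F-I, CC, F-II, and its absence for $V_k, W_k$ explains the omission of $n_0$ for F-III and F-IV.

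I would then bound the two pieces separately. For the aliasing sum, Theorem~\ref{thm:error_discrete_and_continuous_ChebyCoeff_Tn} (or Theorem~\ref{thm:error_discrete_and_continuous_ChebyCoeff_CCQ} for CC) gives the uniform estimate $|c_k^{(\ell)}-\widetilde{c}_k^{\,q}|\le Ch^{\min\{n+1,s\}}/n^{m+2}$, so the absolute value of the aliasing sum is at most $Ch^{\min\{n+1,s\}}/n^{m+2}$ times $\sum_{k<n}|m_k^\Phi|$. Since $\sum 1/(k^2-1)=O(1)$ but $\sum_{k\le n}1/(k+1)=O(\log n)$, this yields the $\log n$ factor for the $U$-, $V$- and $W$-based rules but none for F-I and CC. The exceptional bound at $k=n-2$ in CC (where one power of $h$ is lost) only matters when $n-2$ is even, i.e.\ $n$ is even, and is absorbed by the $h^{n+1}$ contribution already present. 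For the tail sum, Theorem~\ref{Thm_Cheby_Coeff} gives $|c_k^{(\ell)}|\le Ch^{\min\{k,s\}}/k^{m+2}$, and the first non-vanishing moment occurs at $k=n+n_0$ for $T$/$U$-based rules and at $k=n$ for $V$/$W$-based rules; combining this with the rapid $O(1/k^{m+3})$ decay of the summands yields a bound of the form $h^{\min\{n+n_0,s\}}/n^{m+2}$ (respectively $h^{\min\{n,s\}}/n^{m+2}$). Finally, the outer prefactor $\tfrac{h}{2}$ shifts every exponent $\min\{\cdot,s\}$ to $\min\{\cdot+1,s+1\}$, reproducing the three branches of the stated bound.

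The main obstacle I anticipate is the careful bookkeeping of three interacting binary choices: the quadrature type (which fixes $\Phi_k$ and hence which parities of $m_k^\Phi$ survive), the parity of $n$ (which determines whether the first surviving tail index is $n$ or $n+1$, and hence the value of $n_0$), and the CC exception at $k=n-2$. Once the case analysis is organized around these, the remaining work reduces to summing convergent series of the form $\sum 1/k^{p}$ with $p\ge 2$ together with the single harmonic-type sum responsible for $\log n$, and is routine.
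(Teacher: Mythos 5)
Your proposal follows essentially the same route as the paper: reduce the quadrature error to $\tfrac{h}{2}\int_{-1}^1(\widetilde{f}-P_n^q[\widetilde{f}])$, split into an aliasing sum over $k<n$ controlled by Theorems~\ref{thm:error_discrete_and_continuous_ChebyCoeff_Tn}--\ref{thm:error_discrete_and_continuous_ChebyCoeff_CCQ} and a tail sum controlled by Theorem~\ref{Thm_Cheby_Coeff}, weighted by the moments $\int_{-1}^1\Phi_k$. The paper only writes this out for F-I and declares the rest analogous, whereas you correctly supply the moment computations for $U_k,V_k,W_k$ that produce the $\log n$ factors and the presence or absence of $n_0$, as well as the parity argument absorbing the CC exception at $k=n-2$; the argument is sound.
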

	\begin{proof}
		We prove the theorem for $Q_{n}^{f1}[f]$ and for rest of the quadratures the proof follows analogously. The error in the quadrature rule can be estimated as
		\begin{align*}
			\left| \int_a^b f(y)dy - Q_{n}^{f1}[f]\right| &=\left| \frac{h}{2}\int_{-1}^1 \widetilde{f}(t)dt - \frac{h}{2}\int_{-1}^{1} P_n^{f1}[\widetilde{f}](t) dt\right| \\
			&= \left| \frac{h}{2}\int_{-1}^1 \sum_{k=0}^{\infty}c_k^{(1)} T_k(t) dt - \frac{h}{2}\int_{-1}^{1} \sum_{k=0}^{n-1}\widetilde{c}_k^{f1}T_k(t) dt\right|\\
			&\le \frac{h}{2}\left[\sum_{k=n+n_0}^{\infty}|c_k^{(1)}|\left|\int_{-1}^1 T_k(t)dt\right|+\sum_{k=0}^{n-1}|c_k^{(1)}-\widetilde{c}_k^{f1}|\left|\int_{-1}^1 T_k(t)dt\right|\right]. 
		\end{align*}
		Using the bounds given in Theorem~\ref{Thm_Cheby_Coeff} and Theorem~\ref{thm:error_discrete_and_continuous_ChebyCoeff_Tn}, 
		\begin{equation}\label{eq:quad-proof}
			\left| \int_a^b f(y)dy - Q_{n}^{f1}[f]\right| \le \frac{h}{2}C\left[\sum_{k=n+n_0}^{\infty}\frac{h^{\min\{k,s\}}}{k^{m+2}}\left|\int_{-1}^1 T_k(t)dt\right|+\frac{h^{\min\{n+1,s\}}}{n^{m+2}} \sum_{k=0}^{n-1}\left|\int_{-1}^1 T_k(t)dt\right|\right]
		\end{equation}
		for some constant $C$ independent of $h$ and $n$. Noticing  that 
		\begin{equation}
			\left|\int_{-1}^1 T_k(t)dt\right| \le \frac{2}{k^2}, \ k \ge 1 \text{ and } \sum_{k=0}^{n-1}\left|\int_{-1}^1 T_k(t)dt\right| \le C,
		\end{equation}
		and substituting in \eqref{eq:quad-proof}, 
        we obtain the required result.
	\end{proof} 
 
In what follows, we present the error analysis for the composite quadrature rules corresponding to F-I, CC, F-II, F-III, and F-IV quadratures, as an application of the convergence results established in Theorem~\ref{thm:quad_convergence_gen}. To this end, we first recall the definition of a composite quadrature rule, followed by a discussion of the associated errors.
	\begin{definition}\label{def:Composite_Quadrature}
		Let $[a,b]$ be an interval, and for $P\geq 1$, let $a = a_0 < a_1 < \cdots < a_P = b$ be a partition of $[a,b]$ into $P$ subintervals (patches) $I_p = [a_{p-1},a_p]$, for $p=1,2,\dots,P$. For a given $n$-point quadrature rule on each subinterval $I_p$, i.e  
		\begin{equation}\label{eq:Composite_Quadrature_not}
			\int_{a_{p-1}}^{a_p} f(t)\,dt = \frac{h_p}{2}\int_{-1}^{1} f(\xi_p(t))dt \approx\frac{h_p}{2}\sum_{j=1}^{n} w_{j}\, f\!\big(\xi_p(t_j)\big), \quad\text{where}\quad \xi_p(t) = \frac{h_p}{2}t+\frac{a_p+a_{p-1}}{2},
		\end{equation}
		$h_p = a_p-a_{p-1}$, $\{t_j\}_{j=1}^n$ and $\{w_j\}_{j=1}^n$ are the quadrature nodes and weights, the corresponding $n$-point composite quadrature rule over $[a,b]$ with $P$ patches is defined by
		\begin{equation}
			\int_a^b f(t)\,dt \approx \sum_{p=1}^P \frac{h_p}{2} \sum_{j=1}^{n} w_{j}\, f\!\big(\xi_p(t_{j})\big).
		\end{equation}
	\end{definition}
	Denote $\I_{n}^{f1}[f]$, $\I_{n}^{cc}[f]$, $\I_{n}^{f2}[f]$, $\I_{n}^{f3}[f]$ and $\I_{n}^{f4}[f]$ as the $n$-point composite Fej\'er first, Clenshaw-Curtis, Fej\'er second, Fej\'er third, and the Fej\'er fourth quadrature approximations of the integral of a given function $f$ over an interval $[a, b]$ with quadrature weights and nodes listed in Table~\ref{T:quadratures}, respectively. For notational simplicity, we use a compact notation $\I_{n}^{q}[f]$ for $q$ in $f1$, $cc$, $f2$, $f3$, and $f4$.
	\begin{corollary}\label{cor:compositequad_convergence_gen}
		Let $n$ be a positive integer and $f \in X^{m}[a,b]$, then, for an equi-spaced partition of $[a,b]$
		\begin{equation}
			\left|\int_{a}^{b} f(y)dy - \I^q_{n}[f] \right| \le \frac{C}{n^{m+2}}\begin{dcases} h^{\min\{n+n_0,m+2\}},&  \text{ if } q \in \{f1,cc\}\\[6pt]
				\log{(n)} h^{\min\{n+n_0,m+2\}},&\text{ if } q = f2,\\[6pt]
				\log{(n)} h^{\min\{n,m+2\}},&\text{ if } q \in \{f3,f4\},
			\end{dcases}
		\end{equation}
		where $n_0 = 1$ if $n$ is odd and $n_0=0$ otherwise, $h = (b-a)/P \le 1$ with $P$ denoting the number of patches, and $C$ is some constant independent of $h$, $P$ and $n$. 
	\end{corollary}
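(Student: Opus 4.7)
The plan is a straightforward reduction: decompose the composite quadrature error into per-patch contributions, apply the single-interval bound of Theorem~\ref{thm:quad_convergence_gen} on each patch, and sum. Concretely, I would start with the triangle inequality applied to the composite sum in Definition~\ref{def:Composite_Quadrature}:
\[
\left|\int_a^b f(y)\,dy - \I_n^q[f]\right| \;\le\; \sum_{p=1}^{P} \left|\int_{a_{p-1}}^{a_p} f(y)\,dy - Q_n^q[f|_{I_p}]\right|,
\]
where on an equi-spaced partition each patch $I_p = [a_{p-1},a_p]$ has the same length $h = (b-a)/P \le 1$ and $f|_{I_p}\in X^m[I_p]$ inherits the regularity of $f$.

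Next, I would invoke Theorem~\ref{thm:quad_convergence_gen} on each patch with $s = m+1$, producing a bound that is uniform in $p$ of the form
\[
\left|\int_{I_p} f(y)\,dy - Q_n^q[f|_{I_p}]\right| \;\le\; \frac{C}{n^{m+2}}\, \ell_q(n)\, h^{\alpha_q},
\]
where $\ell_q(n)=1$ for $q \in \{f1,cc\}$, $\ell_q(n)=\log n$ for $q \in \{f2,f3,f4\}$, and $\alpha_q$ is the single-patch exponent supplied by Theorem~\ref{thm:quad_convergence_gen}. Uniformity of $C$ in $p$ is crucial here, and it is obtained by bounding the derivative norms of $f$ once over the whole interval $[a,b]$ rather than separately on each subinterval, since those patch-wise norms are dominated by $\|f^{(j)}\|_{L^\infty[a,b]}$.

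Finally, I would sum the $P$ per-patch bounds. Because $P = (b-a)/h$ and $b-a$ is a fixed constant,
\[
\sum_{p=1}^{P} \frac{C}{n^{m+2}}\, \ell_q(n)\, h^{\alpha_q} \;=\; \frac{(b-a)\, C\, \ell_q(n)}{n^{m+2}}\, h^{\alpha_q - 1},
\]
so the factor $P$ absorbs exactly one power of $h$. Identifying $\alpha_q - 1$ with the exponents displayed in the corollary (noting that the parity parameter $n_0$ appears for $q\in\{f1,cc,f2\}$ while it is absent for $q\in\{f3,f4\}$ at the single-patch level, consistent with the statement) and absorbing $(b-a)$ into the constant completes the argument.

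The steps themselves are routine; the point requiring the most care is verifying the uniformity of the constant $C$ across patches, so that the summation produces only the factor $P$ and nothing worse. A secondary bookkeeping concern is tracking the parity parameter $n_0$ and the $\log n$ factor through the summation so that they survive unchanged rather than picking up an unwanted dependence on $P$ or $h$.
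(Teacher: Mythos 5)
There is a genuine gap: applying the single-patch bound with $s=m+1$ uniformly on all $P$ patches and then multiplying by $P=(b-a)/h$ yields the exponent $\min\{n+1+n_0,\,m+2\}-1=\min\{n+n_0,\,m+1\}$ for $q\in\{f1,cc\}$, which is one power of $h$ weaker than the stated $\min\{n+n_0,\,m+2\}$ whenever the regularity (rather than $n$) is the binding constraint. The same loss occurs for the other quadrature types. The summation over $P\sim 1/h$ patches always costs one power of $h$, so to recover the claimed rate you need a \emph{better} per-patch bound on almost all patches than the $X^m$ bound provides.

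The missing idea is the one the paper uses: since $f\in X^m[a,b]=C^m[a,b]\cap C^{m+2}_{\mathrm{pw}}[a,b]$, the discontinuities of $f^{(m+1)}$ and $f^{(m+2)}$ are finitely many, hence they are contained in at most $N_d$ patches with $N_d$ bounded independently of $P$. On the remaining $P-N_d$ patches the restriction of $f$ is genuinely $C^{m+2}$, so Theorem~\ref{thm:quad_convergence_gen} applies there with $s=m+2$, giving the per-patch exponent $\min\{n+1+n_0,\,m+3\}$; summing these $P-N_d$ contributions costs one power of $h$ and lands exactly on $\min\{n+n_0,\,m+2\}$. The $N_d$ exceptional patches carry the weaker per-patch exponent $\min\{n+1+n_0,\,m+2\}$, but since their number is fixed they contribute $O\bigl(h^{\min\{n+1+n_0,m+2\}}\bigr)$ with no factor of $P$, which is dominated by $h^{\min\{n+n_0,m+2\}}$ for $h\le 1$. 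Your observations about uniformity of the constant across patches and the bookkeeping of $\log n$ and $n_0$ are correct, but without the good/bad patch splitting the argument proves a strictly weaker estimate than the corollary asserts.
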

	\begin{proof}
		We prove the theorem for the Fej\'er first quadrature and for rest of the quadratures the proof follows analogously.  Let $a = a < a+h < \cdots < a+Ph = b$ be a partition of $[a,b]$ into $P$ sub-intervals where $h=\frac{b-a}{P}$, and $\xi_p(t) = a+ph+\frac{h}{2}\left(t-1\right)$ for $-1\le t\le 1$, $1 \le p \le P$. Then, 
		\begin{equation}
			\left|\int_{a}^{b} f(y)dy - \I^{f1}_{n}[f] \right| \le \sum_{p=1}^P \left|\frac{h}{2}\int_{-1}^{1} f(\xi_p(t))dt - Q_n^{f1}[f(\xi_p)] \right|.
		\end{equation}
        Now, to estimate the total error, observe that $f \in X^m[a,b]$. This implies that $f^{(m+2)}$ may have only finitely many discontinuities, each confined to a finite collection of patches—say, $N_d$ such patches. Moreover, on the remaining $(P - N_d)$ patches, we have $f \in C^{m+2}$. By invoking Theorem~\ref{thm:quad_convergence_gen}, the total error can therefore be expressed as
		\begin{equation}
			\left|\int_{a}^{b} f(y)dy - \I^{f1}_{n}[f] \right| \leq C\left[ (P-N_d) \frac{h^{\min\{n+1+n_0,m+3\}}}{n^{m+2}}+ N_d \frac{h^{\min\{n+1+n_0,m+2\}}}{n^{m+2}} \right].
		\end{equation}
		Since $P = \frac{b-a}{h}$, and $N_d$ is a fixed constant, we obtain the required result. 
	\end{proof}
	\section{Computational results}\label{sec:numerics}	
	% In this section, we present numerical examples that illustrate the decay rates of Chebyshev coefficients in terms of the interval length, as discussed in Section~\ref{sec:decay}, along with computational results that demonstrate the applications from Section~\ref{sec:appln}. Throughout this section, $n$ denotes the number of terms used in computing the discrete Chebyshev coefficients, while in the context of quadrature error, $n$ represents the number of quadrature nodes used in the approximation. The formulas for the \textbf{n}umerical \textbf{d}ecay \textbf{r}ate (ndr) of the discrete Chebyshev coefficients and the \textbf{n}umerical \textbf{o}rder of \textbf{c}onvergence (noc) of the quadrature error are given by
	% \begin{equation}\label{eq:decay-rate-def}
	% 	\mathrm{ndr} = \frac{\log\left(\widetilde c_{k,h}/\widetilde c_{k,h/2}\right)}{\log(2)},
	% 	\quad \text{and} \quad
	% 	\mathrm{noc} = \frac{\log\left(\varepsilon_{n}/ \varepsilon_{2n}\right)}{\log(2)},
	% \end{equation}
	% where $\widetilde c_{k,h}$ and $\widetilde c_{k,h/2}$ denote the Chebyshev coefficients of a given function on intervals of length $h$ and $h/2$, respectively, and $\varepsilon_n$ and $\varepsilon_{2n}$ denote the quadrature errors with the number of grid points $n$ and $2n$, respectively .

In this section, we present numerical examples illustrating the decay of Chebyshev coefficients with respect to the interval length, as discussed in Section~\ref{sec:decay}, together with computational results demonstrating the applications from Section~\ref{sec:appln}. Throughout, the symbol $n$ denotes the number of terms used when computing discrete Chebyshev coefficients, while in the context of quadrature error, $n$ refers to the number of quadrature nodes.  

To assess performance, we use the \textbf{n}umerical \textbf{d}ecay \textbf{r}ate (ndr) of the discrete Chebyshev coefficients and the \textbf{n}umerical \textbf{o}rder of \textbf{c}onvergence (noc) of the quadrature error, defined by
\begin{equation}\label{eq:decay-rate-def}
   \mathrm{ndr} = \frac{\log\!\left(\widetilde c_{k,h}/\widetilde c_{k,h/2}\right)}{\log(2)},
   \qquad
   \mathrm{noc} = \frac{\log\!\left(\varepsilon_{n}/ \varepsilon_{2n}\right)}{\log(2)},
\end{equation}
where $\widetilde c_{k,h}$ and $\widetilde c_{k,h/2}$ are the Chebyshev coefficients of a given function on intervals of length $h$ and $h/2$, respectively, and $\varepsilon_n$ and $\varepsilon_{2n}$ denote the quadrature errors with $n$ and $2n$ grid points respectively. In the following examples, $m$ denotes the regularity of a given function and $k$ denote the coefficient index. 

	\begin{example}(\textbf{Decay of discrete coefficients: $m$ fixed, $k$ varying})\\
		This example demonstrates the decay rates of the discrete Chebyshev coefficients $\widetilde{c}_k^{f1},\widetilde{c}_k^{cc},\widetilde{c}_k^{f2},\widetilde{c}_k^{f3},\widetilde{c}_k^{f4}$ corresponding to the nodes of the quadratures F-I, CC, F-II, F-III, and F-IV, respectively (see Table~\ref{T:quadratures} for definitions). The coefficients $\widetilde{c}_k$ are computed for $1\leq k\leq n-1$, with $n=8$, for the function $f(x) = x^4|x|+e^x$, using the nodes of the respective quadratures. The computations are performed on intervals $[a_p,b_p]=[-\frac{1}{2p},\frac{1}{p}]$ with the interval length $h_p = \frac{3}{2p}$, where $p$ is successively doubled in each evaluation. As per Corollary~\ref{cor:disCheby_Coeff_Tn}, the expected \textbf{t}heoretical \textbf{d}ecay \textbf{r}ate (tdr) is $\min\{k,m+1\}$, where $m$ is the regularity parameter of the given function. Since $f\in X^m[a_p,b_p]$ with $m=4$ for every $p\geq 1$, we have $\mathrm
		{tdr} = \min\{k,5\}$. As illustrated in Figure~\ref{Fig:decay}, the numerical decay rates (ndr) are in agreement with the theoretical predictions. Notice that the decay rates of the discrete coefficients for all quadratures are the same as per the Corollary~\ref{cor:disCheby_Coeff_Tn}.
        \begin{figure}[H]
		\centering
		\includegraphics[width=13.50cm]{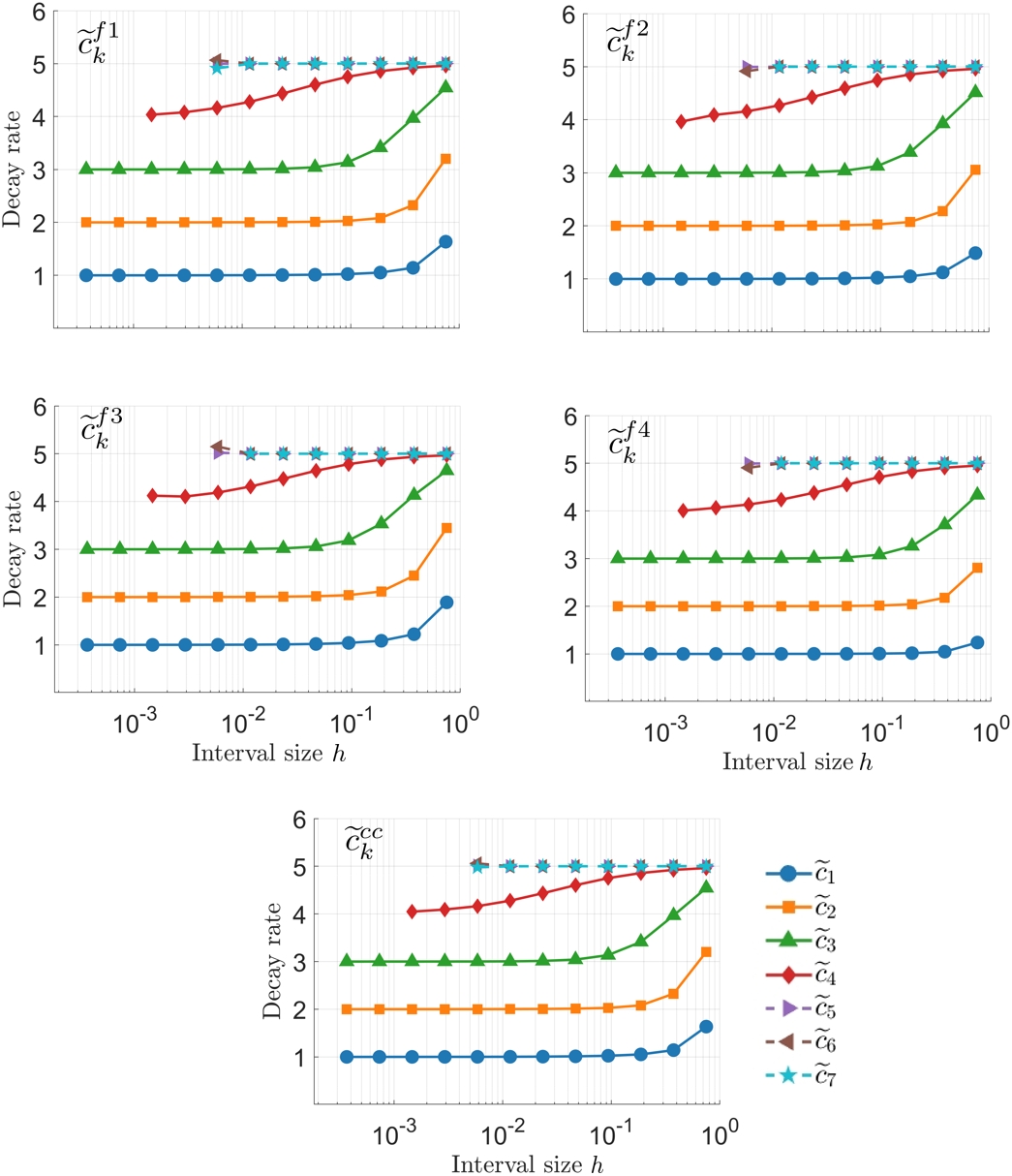}
		\caption{Decay rates of the discrete Chebyshev coefficients $\tilde{c}_k$ for $1\leq k\leq n-1$ with $n=8$ of the function $f(x) = x^4|x|+e^x$ corresponding to the nodes of different quadratures (F-I, F-II, F-III, F-IV, and CC, see Table~\ref{T:quadratures}) on the interval $[a_p,b_p]=[-\frac{1}{2p},\frac{1}{p}]$. \label{Fig:decay}}
	\end{figure}
	\end{example}
	\begin{example}(\textbf{Decay of discrete coefficients: $m$ varying, $k$ fixed})\\
		In this example, we examine the numerical decay rates of the Chebyshev coefficients $\widetilde{c}_k^{f1}$ associated with the nodes of the F-I quadrature. The coefficient index $k$ is fixed, while the regularity parameter $m$ of the function $f(x) = x^m|x|+e^x$, is varied from $0$ to $6$, and the corresponding numerical decay rates are computed. Specifically, graphs in Figure~\ref{Fig:decay1} present the results for $k=3,4,5$. In each case, the observed numerical decay rates (ndr) align precisely with the theoretically predicted rates $\mathrm{tdr}=\min\{k,m+1\}$, yielding $\mathrm{tdr}=\min\{3,m+1\}$, $\mathrm{tdr}=\min\{4,m+1\}$, and $\mathrm{tdr}=\min\{5,m+1\}$, respectively.
        \begin{figure}[H]
		\centering
		\includegraphics[width=15cm]{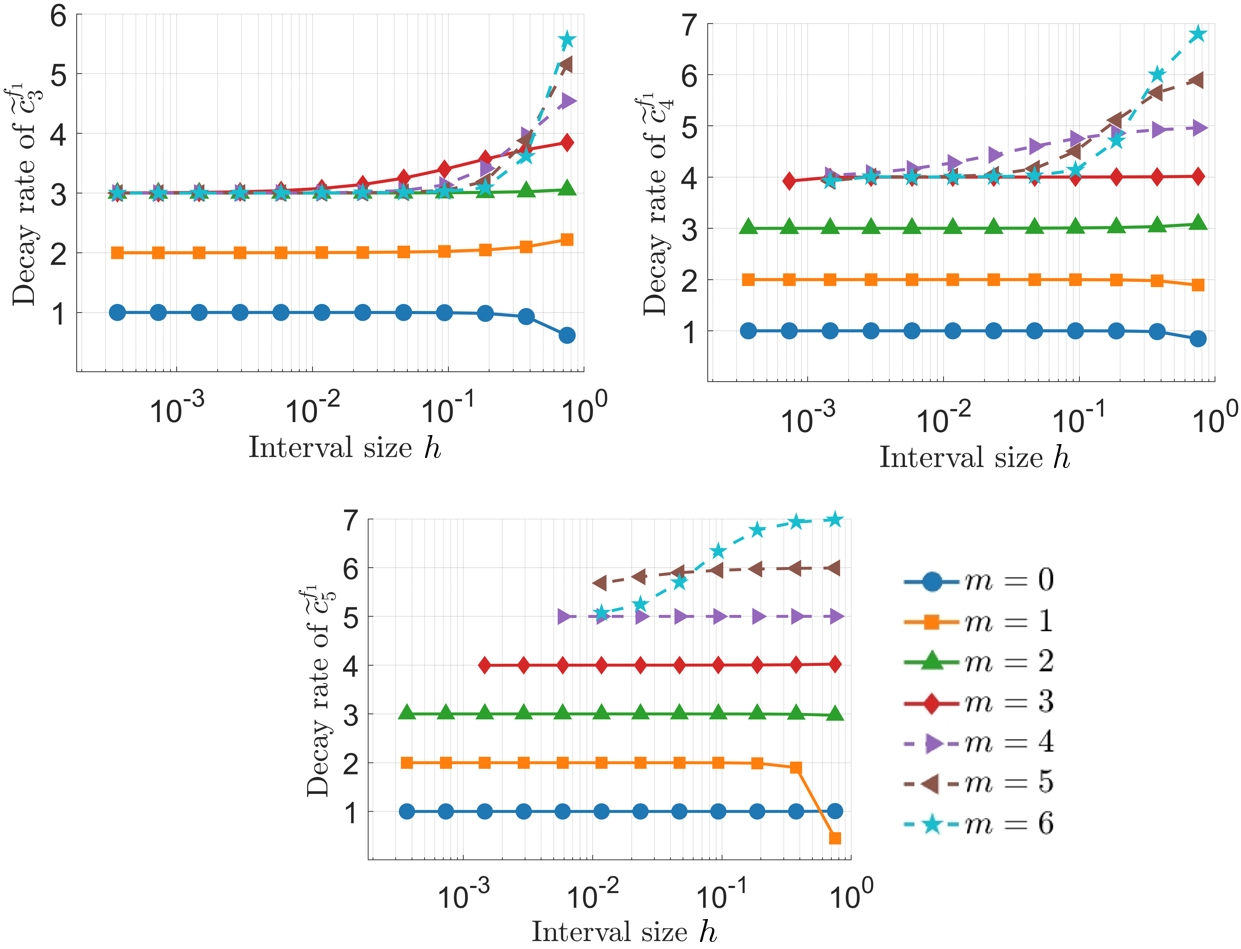}
		\caption{Decay rates of the discrete Chebyshev coefficients $\widetilde{c}_k^{f1}$ for $k=3,4,5$ with $n=8$ for functions $f(x) = x^m|x|+e^x$, $m=0,\dots,6$ corresponding to the nodes of F-I quadrature on the interval $[a_p,b_p]=[-\frac{1}{2p},\frac{1}{p}]$. Similar decay rates are observed for the discrete coefficients corresponding to other quadratures.}
		\label{Fig:decay1}
	\end{figure}
	\end{example}
	\begin{remark}
		Notice that by the triangle inequality~\eqref{eq:triangle} together with Corollary~\ref{cor:disCheby_Coeff_Tn}, the numerical decay rates observed in Figure~\ref{Fig:decay}  and Figure~\ref{Fig:decay1} are intrinsically consistent with both the decay of the continuous Chebyshev coefficients $c_k$ established in Theorem~\ref{Thm_Cheby_Coeff}, and the convergence of the error $|c_k-\widetilde{c}_k|$ as $n \to \infty$ described in Theorem~\ref{thm:error_discrete_and_continuous_ChebyCoeff_Tn}.	
	\end{remark}
	%------------------------------------------------------------%
	\begin{example}(\textbf{Convergence of F-I quadrature: $m$ varying, $n$ fixed})\\
		In this example, we examine the quadrature errors in approximating the test function $f(x) = x^m|x| + e^x$ over the interval $[a_p,b_p] = [-\frac{1}{2p}, \frac{1}{p}]$, where the interval length is $h_p = \frac{3}{2p}$ and $p$ is successively doubled at each step. The number of quadrature nodes is fixed ($n=8$), while the regularity of the test function is varied by changing the values of $m$ from 0 to 4. The numerical results, reported in Table~\ref{T:FI-n-fix}, show that the observed noc is equal to $m+2$. This is in complete agreement with the theoretical prediction $\text{toc} = \min\{n+1+n_0, m+2\}$ with $n_0=1$ if $n$ is odd and $n_0=0$ if $n$ is even as stated in Theorem~\ref{thm:quad_convergence_gen}, in the framework where the interval length $h_p \to 0$.
	\end{example}
	
	\begin{example}(\textbf{Convergence of F-I quadrature: $m$ fixed, $n$ varying})\\
		This example reports the numerical results of the F-I quadrature for approximating the integral of $f(x) = x^{10}|x| + e^x$ over the interval $[a_p,b_p] = \Bigl[-\tfrac{1}{2p}, \tfrac{1}{p}\Bigr]$ with the number of quadrature points $n$ varying. 
		The expected order of convergence, given by Theorem~\ref{thm:quad_convergence_gen}, is $\mathrm{toc} = \min\{n+1+n_0,\, m+2\}$, 
		where $n_0 = 1$ if $n$ is odd and $n_0 = 0$ if $n$ is even. The theoretical rates are in agreement with the numerical results shown in Table~\ref{T:FI-n-vary}, where we vary the number of quadrature points $n$ from $n=1$ to $n=4$ and the $\mathrm{noc}=n+1+n_0$.
	\end{example}

	\begin{table}[H]
		\centering
		\scalebox{0.910}{\begin{tabular}{c| c| c| c| c|c| c|c| c|c|c}
    \hline \hline
    \multirow{2}{*}{$p$} & 
    \multicolumn{2}{c|}{$m=0$ (toc=2)}& 
    \multicolumn{2}{c|}{$m=1$ (toc=3)}& 
    \multicolumn{2}{c|}{$m=2$ (toc=4)}& 
    \multicolumn{2}{c|}{$m=3$ (toc=5)} &
    \multicolumn{2}{c}{$m=4$ (toc=6)} \\
    \cline{2-11}
    & $\varepsilon_n$ & noc& $\varepsilon_n$ & noc& $\varepsilon_n$ & noc& $\varepsilon_n$ & noc & $\varepsilon_n$ & noc \\ 
    \cline{1-3}
    \hline \hline
    $1$&  $4.80 \times 10^{-3}$&  -&  $5.53 \times 10^{-4}$&  -&  $5.20 \times 10^{-5}$&  -&  $2.19 \times 10^{-5}$&  -& $1.46 \times 10^{-6}$ &-\\
    $2$&  $1.20 \times 10^{-3}$&  $2.00$&  $6.91 \times 10^{-5}$&  $3.00$&  $3.25 \times 10^{-6}$&  $4.00$&  $6.84 \times 10^{-7}$&  $5.00$& $2.28 \times 10^{-8}$ &$6.00$\\
    $4$&  $3.00 \times 10^{-4}$&  $2.00$&  $8.64 \times 10^{-6}$&  $3.00$&  $2.03 \times 10^{-7}$&  $4.00$&  $2.14 \times 10^{-8}$&  $5.00$& $3.56 \times 10^{-10}$ &$6.00$\\
    $8$&  $7.49 \times 10^{-5}$&  $2.00$&  $1.08 \times 10^{-6}$&  $3.00$&  $1.27 \times 10^{-8}$&  $4.00$&  $6.68 \times 10^{-10}$&  $5.00$& $5.57 \times 10^{-12}$ &$6.00$\\
    $16$&  $1.87 \times 10^{-5}$&  $2.00$&  $1.35 \times 10^{-7}$&  $3.00$&  $7.93 \times 10^{-10}$&  $4.00$&  $2.09 \times 10^{-11}$&  $5.00$& $8.70 \times 10^{-14}$ &$6.00$\\
    $32$&  $4.68 \times 10^{-6}$&  $2.00$&  $1.69 \times 10^{-8}$&  $3.00$&  $4.96 \times 10^{-11}$&  $4.00$&  $6.53 \times 10^{-13}$&  $5.00$& $1.30 \times 10^{-15}$ &$6.07$\\
    $64$&  $1.17 \times 10^{-6}$&  $2.00$&  $2.11 \times 10^{-9}$&  $3.00$&  $3.10 \times 10^{-12}$&  $4.00$&  $2.04 \times 10^{-14}$&  $5.00$& $4.16 \times 10^{-17}$ &$4.96$\\
    $128$&  $2.93 \times 10^{-7}$&  $2.00$&  $2.64 \times 10^{-10}$&  $3.00$&  $1.94 \times 10^{-13}$&  $4.00$&  $5.55 \times 10^{-16}$&  $5.20$& - &-\\
    $256$&  $7.32 \times 10^{-8}$&  $2.00$&  $3.30 \times 10^{-11}$&  $3.00$&  $1.20 \times 10^{-14}$&  $4.01$&  -&  -& - &-\\
    $512$&  $1.83 \times 10^{-8}$&  $2.00$&  $4.12 \times 10^{-12}$&  $3.00$&  $7.33 \times 10^{-16}$&  $4.03$&  -&  -& - &-\\
    $1024$& $4.57 \times 10^{-9}$& $2.00$& $5.15 \times 10^{-13}$& $3.00$& -& -& -& -&- &-\\
    \hline \hline
\end{tabular}} 
\caption{Convergence illustration of the Fej\'er first quadrature (with fixed quadrature nodes, specifically $n=8$) for approximating $f(x)=x^m|x|+e^x$ over the interval $[a_p,b_p] = \left[-\tfrac{1}{2p}, \tfrac{1}{p}\right]$, as $(b_p - a_p) \to 0$.    
         }
		\label{T:FI-n-fix}
	\end{table}
	
	\begin{table}[H]
		\centering
		\scalebox{0.89}{
			\begin{tabular}{c| c| c| c| c|c| c|c| c|c|c}
    \hline \hline
    \multirow{2}{*}{$p$} & 
    \multicolumn{2}{c|}{$n=1$ (toc=3)}& 
    \multicolumn{2}{c|}{$n=2$ (toc=3)}& 
    \multicolumn{2}{c|}{$n=3$ (toc=5)}& 
    \multicolumn{2}{c|}{$n=4$ (toc=5)} &
    \multicolumn{2}{c}{$n=5$ (toc=7)} \\
    \cline{2-11}
    & $\varepsilon_n$ & noc& $\varepsilon_n$ & noc& $\varepsilon_n$ & noc& $\varepsilon_n$ & noc & $\varepsilon_n$ & noc \\ 
    \cline{1-3}
    \hline \hline
    $1$&  $2.69 \times 10^{-1}$&  -&  $5.72 \times 10^{-2}$&  -&  $2.19 \times 10^{-2}$&  -&  $2.13 \times 10^{-2}$&  -&  $6.81 \times 10^{-3}$&-\\
    $2$&  $2.01 \times 10^{-2}$&  $3.74$&  $9.99 \times 10^{-3}$&  $2.52$&  $4.02 \times 10^{-5}$&  $9.09$&  $1.11 \times 10^{-5}$&  $10.91$&  $1.67 \times 10^{-6}$&$11.99$\\
    $4$&  $2.34 \times 10^{-3}$&  $3.10$&  $1.17 \times 10^{-3}$&  $3.09$&  $1.03 \times 10^{-6}$&  $5.29$&  $1.73 \times 10^{-7}$&  $6.00$&  $4.78 \times 10^{-10}$&$11.77$\\
    $8$&  $2.84 \times 10^{-4}$&  $3.05$&  $1.42 \times 10^{-4}$&  $3.05$&  $3.11 \times 10^{-8}$&  $5.05$&  $5.19 \times 10^{-9}$&  $5.06$&  $6.42 \times 10^{-13}$&$9.54$\\
    $16$&  $3.49 \times 10^{-5}$&  $3.02$&  $1.74 \times 10^{-5}$&  $3.02$&  $9.58 \times 10^{-10}$&  $5.02$&  $1.60 \times 10^{-10}$&  $5.02$&  $4.27 \times 10^{-15}$&$7.23$\\
    $32$&  $4.33 \times 10^{-6}$&  $3.01$&  $2.16 \times 10^{-6}$&  $3.01$&  $2.97 \times 10^{-11}$&  $5.01$&  $4.95 \times 10^{-12}$&  $5.01$&  $2.78 \times 10^{-17}$&$7.27$\\
    $64$&  $5.39 \times 10^{-7}$&  $3.01$&  $2.69 \times 10^{-7}$&  $3.01$&  $9.24 \times 10^{-13}$&  $5.01$&  $1.54 \times 10^{-13}$&  $5.01$&  -&-\\
    $128$&  $6.72 \times 10^{-8}$&  $3.00$&  $3.36 \times 10^{-8}$&  $3.00$&  $2.87 \times 10^{-14}$&  $5.01$&  $4.72 \times 10^{-15}$&  $5.03$&  -&  - \\
    $256$&  $8.39 \times 10^{-9}$&  $3.00$&  $4.20 \times 10^{-9}$&  $3.00$&  $1.00 \times 10^{-15}$&  $4.85$&  $2.51 \times 10^{-16}$&  $4.24$&  -&  - \\
    $512$& $1.05 \times 10^{-9}$& $3.00$& $5.24 \times 10^{-10}$& $3.00$& $5.12 \times 10^{-17}$& $4.29$& $2.78 \times 10^{-17}$& $3.17$& -&-\\
    $1024$& $1.31 \times 10^{-10}$& $3.00$& $6.55 \times 10^{-11}$& $3.00$& -&   - & &   - & -&  - \\
    \hline \hline
\end{tabular}}
		\caption{Convergence results of the Fej\'er first quadrature, with varying numbers of nodes $n$, for the approximation of the integral of $f(x)=x^{10}|x|+e^x$ as the domain size tends to zero.}
		\label{T:FI-n-vary}
	\end{table}
	%------------------------------------------------------------%
	\section{Conclusions}\label{conclusion}	
    We established decay estimates for Chebyshev coefficients on shrinking intervals for all four kinds of Chebyshev polynomials and used them to quantify errors in discrete approximations and Chebyshev-based quadratures. The analysis shows how coefficient decay and quadrature accuracy depend on function regularity and interval length. Presented numerical results corroborated these findings: the observed decay of coefficients and quadrature error were consistent with the theoretical estimates. These results are important in designing high-order numerical algorithms that achieve efficiency through localized spectral approximation.
	% In this paper we have established a decay estimate on the continuous Chebyshev coefficients of a function concerning its degree of regularity and length of the domain. As an application, we estimated the convergence behavior of the interpolatory quadratures based on the zeros and extrema of the Chebyshev polynomials through the decay rate of the Chebyshev coefficients. Though the decay rate of ${c}_{k}$ in terms of $k$ is known on a fixed interval, the decay is not well known in the literature if length of the interval tends to zero. We proved that if $f\in X^{m}$ then for $k>0$, $c_{k}$ decays to zero with order $h^{\min\{k,m+1\}}$ as the length of the interval $h\rightarrow 0$, and if $h$ is fixed then the decay rate is given by $k^{-m-2}$ as $k\rightarrow \infty$.
	%========================================================================%
	\section{Acknowledgements}
	Krishna acknowledges the financial support from CSIR through the file no. 09/1020(0183)/2019-EMR-I.
	%------------------------------------------------------------%
	
	\appendix
    \section{appendix}
    \begin{lemma}\label{lem:decay}
		Let $f$ be a continuous function on a bounded interval $[a,b]$ and let $x_0 = \frac{a+b}{2}$. Then the Chebyshev coefficients $c_k$ of $f$ satisfies the following:
		\begin{equation*}
			c_0\rightarrow f\left(x_0\right),\quad \text{and}\quad c_k \rightarrow 0 \quad \text{for every}\quad k\ge 1 \quad \text {as}\quad h \rightarrow 0.
		\end{equation*}
		
	\end{lemma}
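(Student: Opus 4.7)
The plan is to start from the trigonometric form of the Chebyshev coefficients obtained by the substitution $x=\cos\theta$ in \eqref{eq:ck1-cont}, namely
\[
c_k \;=\; \frac{\gamma_k}{\pi}\int_{0}^{\pi} f\!\left(x_0 + \tfrac{h}{2}\cos\theta\right)\cos(k\theta)\,d\theta,
\]
and then pass to the limit as $h\to 0$ inside the integral. This is the natural representation because the interval-length dependence of the integrand is concentrated entirely in the argument of $f$, while the trigonometric factor $\cos(k\theta)$ is independent of $h$.

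First I would observe that since $f\in C[a,b]$ and $[a,b]$ is compact, $f$ is uniformly continuous on $[a,b]$. For any $\theta\in[0,\pi]$ and any $h>0$, the point $x_0+\tfrac{h}{2}\cos\theta$ lies in $[a,b]$ and differs from $x_0$ by at most $h/2$. Hence
\[
\sup_{\theta\in[0,\pi]}\left|f\!\left(x_0+\tfrac{h}{2}\cos\theta\right) - f(x_0)\right| \;\longrightarrow\; 0 \quad \text{as } h\to 0,
\]
i.e.\ the integrand converges uniformly to $f(x_0)\cos(k\theta)$ on $[0,\pi]$. Uniform convergence on a compact interval allows us to interchange limit and integral (equivalently, bounded convergence applies since $|f|$ is bounded on $[a,b]$), giving
\[
\lim_{h\to 0} c_k \;=\; \frac{\gamma_k}{\pi}\, f(x_0)\int_{0}^{\pi}\cos(k\theta)\,d\theta.
\]

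For $k=0$ this evaluates to $\frac{1}{\pi}\cdot f(x_0)\cdot \pi = f(x_0)$, and for $k\geq 1$ the integral $\int_0^\pi \cos(k\theta)\,d\theta$ vanishes, yielding $c_k\to 0$. There is no real obstacle here; the only point requiring a brief justification is the legitimacy of passing the limit inside the integral, which is handled by uniform continuity of $f$ on the compact domain $[a,b]$.
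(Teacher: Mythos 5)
Your proof is correct and follows essentially the same route as the paper: substitute $x=\cos\theta$, pass the limit $h\to 0$ inside the integral, and evaluate $\int_0^\pi\cos(k\theta)\,d\theta$. The only cosmetic difference is that you justify the interchange via uniform continuity (bounded convergence), whereas the paper invokes the Dominated Convergence Theorem; both are equally valid here.
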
  
	\begin{proof}
		The zeroth Chebyshev coefficient of $\widetilde{f}$ can be written as,
		\[ c_0=\frac{1}{\pi} \int_{-1}^1 \widetilde{f}(t) \frac{1}{\sqrt{1-t^2}} dt = \frac{1}{\pi} \int_0^\pi \widetilde{f}(\cos \theta) d \theta, \]
		and for $k \geq 1$,
		\[
		c_k=\frac{2}{\pi} \int_{-1}^1 \widetilde{f}(t) \frac{T_k(t)}{\sqrt{1-t^2}} d t=\frac{2}{\pi} \int_0^\pi \widetilde{f}(\cos \theta) \cos (k \theta) d \theta ,\]
		where $\widetilde{f}$ is as given in \eqref{eq:util_def}.
		Applying the Dominated Convergence Theorem, we obtain  
		\[\lim _{h \rightarrow 0} c_0=\frac{1}{\pi} \int_0^\pi \lim _{h \rightarrow 0} \widetilde{f}(\cos \theta) d \theta=f\left(x_0\right),\]
		and for $k\ge 1$,
		\[\lim _{h \rightarrow 0} c_k=\frac{2}{\pi} \int_0^\pi \lim _{h \rightarrow 0} \widetilde{f}(\cos \theta) \cos (k \theta) d \theta=\frac{2}{\pi}u(x_0) \int_0^\pi \cos (k \theta) d \theta=0 .\]
	\end{proof}
    	\begin{lemma}\label{lemma_Cheby_Coeff_form}
		Let $f \in X^m[a,b]$ and $1\leq M\leq m+1$, $k \in \mathbb{N}$, then
		\begin{equation}\label{eq_Cheby_Coeff_form}
			c_k = \sum_{i=1}^{M} \sum_{j=0}^{i} \frac{h^{i}}{k^M}\alpha_{ij}^{(M)}  I_{M}[f,(i,j,k)],
		\end{equation}
		where $\alpha_{ij}^{(M)}$ are constants defined recursively and independent of $k$ and $h$, that is, for $0\le j \le i$, $1\le i \le M$,
        $\alpha_{ij}^{(M)} = \alpha_{1,ij}^{(M)}+\alpha_{2,ij}^{(M)}+\alpha_{3,ij}^{(M)}$,  where
        \begin{equation*}
		\begin{aligned}
			&\alpha_{1,ij}^{(M)}=\begin{cases}  
				0,  &\text{if}\quad i=M \text{ or } j=i\\
				(-1)^{M} (j+1)\alpha_{i(j+1)}^{(M-1)} , &\text{ otherwise, }\end{cases}\\                
			&\alpha_{2,ij}^{(M)}= \begin{cases}  
				0, &\text{if}\quad i=M \text{ or } j=0\\ 
				(-1)^{M} (i-j+1)\alpha_{i(j-1)}^{(M-1)}  , &\text{ otherwise, }\end{cases}\\
                &\alpha_{3,ij}^{(M)}=\begin{cases}  
				0, &\text{if}\quad i=1 \text{ or } j=0\\ 
				\frac{(-1)^{M-1}}{2}\alpha_{(i-1)(j-1)}^{M-1}  , &\text{ otherwise, }\end{cases}
		\end{aligned}
        \end{equation*}
		and $\alpha_{10}^{(1)}=0,\alpha_{11}^{(1)} = \frac{1}{2\pi}$. Moreover, $I_{m+1}[f,(m+1,j,k)]=O\left(\frac{1}{k}\right)$, and if $f \in C^{m+2}[a,b]$ then \eqref{eq_Cheby_Coeff_form} holds for $1 \le M \le m+2$.
	\end{lemma}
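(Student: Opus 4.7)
The plan is induction on the integration-by-parts level $M$, using Lemma~\ref{lemma_expression_Ck_induction} as the engine and matching the triple decomposition $\alpha_{ij}^{(M)} = \alpha_{1,ij}^{(M)} + \alpha_{2,ij}^{(M)} + \alpha_{3,ij}^{(M)}$ by careful reindexing of the sums. For the base case $M=1$, the single integration by parts in \eqref{Eq_Cheby_Coeff_intPart_one} gives $c_k = \tfrac{h}{2\pi k}\, I_{1}[f,(1,1,k)]$, which is precisely \eqref{eq_Cheby_Coeff_form} at level $M=1$ with the prescribed initial data $\alpha_{11}^{(1)} = \tfrac{1}{2\pi}$, $\alpha_{10}^{(1)} = 0$.

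For the inductive step, assuming \eqref{eq_Cheby_Coeff_form} at level $M$, the strategy is to substitute the recurrence \eqref{eq_rec_reln} for each $I_M[f,(i,j,k)]$ on the right-hand side. This produces three sums over the triangular index set $\{(i,j):1\le i\le M,\ 0\le j\le i\}$ in which the new integrals are $I_{M+1}[f,(i,j-1,k)]$, $I_{M+1}[f,(i,j+1,k)]$, and $I_{M+1}[f,(i+1,j+1,k)]$ respectively. Performing the shifts $j\mapsto j+1$, $j\mapsto j-1$, and $(i,j)\mapsto(i-1,j-1)$ reindexes the three sums into sums of $I_{M+1}[f,(i,j,k)]$ over the new indices, and collecting the coefficient of each $\tfrac{h^i}{k^{M+1}}I_{M+1}[f,(i,j,k)]$ reproduces exactly the three-piece formula for $\alpha_{ij}^{(M+1)}$ prescribed in the lemma. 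The edge conditions ($i=M+1$ in the first two pieces, $i=1$ in the third, and $j\in\{0,i\}$ in the first two) correspond precisely to the indices at which the shifted sums fall outside the triangular range, which forces the coefficient to vanish.

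At the last step $M=m\to m+1$ under $f\in X^m[a,b]$, the integrand carries $\widetilde{f}^{(m+1)}$, which is only piecewise continuous. The required integration by parts is performed on each continuity subinterval $[s_{\ell-1},s_{\ell}]$ exactly as in the proof of Lemma~\ref{lemma_expression_Ck_induction}: the continuity of $\widetilde{f}^{(m)}$ together with the vanishing of $\zeta_{m+1}(k\theta)$ at $\pm\pi$ causes the interior boundary terms to telescope away, so the same recursion holds at this level. If instead $f\in C^{m+2}[a,b]$, the induction simply proceeds one step further to $M=m+2$ with no piecewise bookkeeping, since $\widetilde{f}^{(m+2)}$ is globally continuous. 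For the remaining claim $I_{m+1}[f,(m+1,j,k)]=O(1/k)$, one additional integration by parts on the continuity pieces of $\widetilde{f}^{(m+1)}$ supplies the factor $1/k$ via the antiderivative of $\zeta_{m+1}(k\theta)$; the resulting interior jump contributions and the leftover integral, which contains the bounded (piecewise continuous) $\widetilde{f}^{(m+2)}$, are each $O(1)$ uniformly in $k$, yielding the claimed rate.

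The main obstacle is expected to be the sign and index bookkeeping in the inductive step: one must track the factor $(-1)^M$ from the recurrence through each reindexing to verify that it matches the sign declared for each of $\alpha_{1,ij}^{(M+1)}, \alpha_{2,ij}^{(M+1)}, \alpha_{3,ij}^{(M+1)}$, and that the vanishing conditions emerge exactly from indices pushed outside the triangular domain by the three shifts. Everything else --- the base-case identity, the piecewise handling at the $X^m$ breakpoints, and the $O(1/k)$ estimate --- is a direct consequence of the IBP machinery already assembled in Lemma~\ref{lemma_expression_Ck_induction}.
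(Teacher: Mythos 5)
Your proposal is correct and follows essentially the same route as the paper: induction on $M$ with base case $c_k = \tfrac{h}{2\pi k}I_1[f,(1,1,k)]$, substitution of the recurrence from Lemma~\ref{lemma_expression_Ck_induction} followed by the three index shifts to read off $\alpha_{ij}^{(M+1)}$, and a final piecewise integration by parts (with bounded jump terms) to extract the $O(1/k)$ factor for $I_{m+1}[f,(m+1,j,k)]$. No gaps; this matches the paper's argument in both structure and detail.
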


	\begin{proof}
		We prove this lemma using induction on $M$. When $M=1$, the lemma is true since $c_{k} = \frac{1}{2\pi} \frac{h}{k} I_{1}[f,(1,1,k)]$ as obtained from integration by parts in~\eqref{Eq_Cheby_Coeff_intPart_one}. By induction hypothesis, assume that the result is true for any natural number $M\le m$. Using \Cref{lemma_expression_Ck_induction}, the coefficient $c_{k}$ can be expressed as
		\begin{align*}
			%c_k &=  \sum_{i=1}^{M} \sum_{j=0}^{i} \frac{h^{i}}{k^{M}}\alpha_{ij}^{(M)} \left[ \frac{(-1)^{M+1} j}{k}I_{M+1}[f,(i,j-1,k)] + \frac{(-1)^{M} (i-j)}{k}I_{M+1}[f,(i,j+1,k)] \right. \label{eq_rec_relation_m+2}\\ 
			% &=\quad + \left. \frac{(-1)^N}{2}\frac{h}{k}I_{M+1}[f,(i+1,j+1,k)]\right]\\\nonumber
			c_k &=   \sum_{i=1}^{M} \sum_{j=0}^{i-1} \frac{h^{i}}{k^{M+1}}\left[(-1)^{M+1} \alpha_{i(j+1)}^{(M)} (j+1) \right]I_{M+1}[f,(i,j,k)] \\ 
			&\quad +\sum_{i=1}^{M} \sum_{j=1}^{i} \frac{h^{i}}{k^{M+1}}\left[(-1)^{M} \alpha_{i(j-1)}^{(M)} (i-j+1) \right]I_{M+1}[f,(i,j,k)]\\
			&\quad +\sum_{i=2}^{M+1} \sum_{j=1}^{i} \frac{h^{i}}{k^{M+1}}\left[\frac{(-1)^M}{2}\alpha_{(i-1)(j-1)}^{(M)}\right] I_{M+1}[f,(i,j,k)]\\
			&=  \sum_{i=1}^{M+1} \sum_{j=0}^{i} \frac{h^{i}}{k^{M+1}}\alpha_{ij}^{(M+1)} I_{M+1}[f,(i,j,k)].
		\end{align*}
		Thus, equation \eqref{eq_Cheby_Coeff_form} is true for $1\le M\le m+1$. This completes the proof by induction. Additionally, since $\widetilde{f}(\cos{(\cdot)})\in C^{m+2}_{\mathrm{pw}}[-\pi,\pi]$, there exist discontinuities of $(m+1)$-th derivative of $\widetilde{f}(\cos{\theta})$ at $\{s_\ell\}_{\ell=0}^{n_d}$ for some $n_{d}\in \N$ such that $[-\pi,\pi] = \cup_{\ell=1}^{n_d} [s_{\ell-1},s_\ell]$. Therefore,
		\begin{align*} I_{m+1}[f,(m+1,j,k)] &= \sum_{\ell=1}^{n_d}\int_{s_{\ell-1}}^{s_{\ell}} \widetilde{f}^{(m+1)}(\cos{\theta})\sin^{j}(\theta)\cos^{i-j}(\theta)\zeta_{m+1}(k\theta)d\theta \\ &= \frac{(-1)^{m+1}}{k}\sum_{\ell=1}^{n_d}\left[ \left(\widetilde{f}^{(m+1)}(\cos{\theta})\sin^{j}(\theta)\cos^{i-j}(\theta) \zeta_{m+2}(k\theta)\right)_{s_{\ell-1}}^{s_\ell} \right. \nonumber\\
			&\quad \left. -\int_{s_{\ell-1}}^{s_{\ell}} \left(\widetilde{f}^{(m+1)}(\cos{\theta})\sin^{j}(\theta)\cos^{i-j}(\theta)\right)'\zeta_{m+2}(k\theta)d\theta \right] = O\left(\frac{1}{k}\right).
		\end{align*} 
		This completes the proof for case $f\in X^{m}[a,b]$. If $f\in C^{m+2}[a,b]$ then another application of recursive relation \Cref{lemma_expression_Ck_induction} proves \eqref{eq_Cheby_Coeff_form}  for $M=m+2$.
	\end{proof}

	\section{Appendix}
	The discrete orthogonality relations presented in this section of the appendix are well documented in the literature for indices $i$ and $k$ within the range $0 \leq i, k \leq n-1$ \cite[Ch. 4]{mason2002chebyshev}. However, we have found limited references addressing the case of discrete orthogonality for $i \geq 0$ and $0 \leq k \leq n-1$, particularly across all Chebyshev polynomials and the corresponding nodes summarized in Table~\ref{T:quadratures}. For the convenience of the reader, we provide here a direct proof of discrete orthogonality for the Chebyshev polynomials $T_n$ with respect to the quadrature nodes of Fej\'er first quadrature and state analogous discrete orthogonality results for related cases.  
	
	\begin{lemma}\label{lem:dis_otho_gen}(\textbf{Discrete orthogonality relations})\\
		Let $T_n$, $U_n$, $V_n$, and $W_n$ denote the Chebyshev polynomials of the first, second, third, and fourth kinds, respectively, as defined in~\eqref{eq:ChebyPoly}.
		Let $t_j^{f1}$, $t_j^{f2}$, $t_j^{f3}$, $t_j^{f4}$ and $t_j^{cc}$ represent the quadrature nodes corresponding to the Fejér first, Fejér second, Fejér third, Fejér fourth, and Clenshaw–Curtis quadratures, respectively, as specified in Table~\ref{T:quadratures}.
		Then, for $0 \leq k \leq n-1$ and $i \geq 0$, the discrete orthogonality relations for the Chebyshev polynomial $T_n$, with respect to $ t_j^{f1}$ and $t_j^{cc}$, and Chebyshev polynomials $U_n$, $V_n$, and $W_n$ with respect to the quadrature nodes $t_j^{f2}$, $t_j^{f3}$, and $t_j^{f4}$, respectively, are given by
		\begin{alignat}{2}
			\sum_{j=0}^{n-1} T_i(t_j^{f1})T_k(t_j^{f1}) &= \frac{n}{2}\begin{dcases}
				(-1)^{\frac{i\mp k}{2n}}, & 2n \mid i\mp k, \ (i,k)\ne(0,0) \\
				2, & (i,k)=(0,0)\\
				0, & \text{otherwise},
			\end{dcases}.\label{eq:dis_otho_FFQ}\\
			\sum_{j=0}^{n-1} \frac{1}{\widetilde \gamma_j} T_i\left(t_j^{cc}\right)T_k\left(t_j^{cc}\right) &= \left(\frac{n-1}{2} \right)\begin{dcases}
				\widetilde\gamma_k &, 2(n-1) \mid i-k, \\ 
				1 &,   2(n-1) \mid i+k, \ 0<k<n-1 \\ 
				0 &, \text{otherwise},
			\end{dcases} \label{lem:dis_otho_CC}\\
			\sum_{j=0}^{n-1} U_i\left(t_j^{f2}\right)U_k\left(t_j^{f2}\right)\left(1-(t_j^{f2})^2\right) &= \left(\frac{n+1}{2}\right)\begin{dcases}
				1 &, 2(n+1) \mid i-k, \\
				-1 &, 2(n+1) \mid i+k+2, \\
				0 &, \text{otherwise},
			\end{dcases} \label{lem:dis_otho_Un}\\
			\sum_{j=0}^{n-1} V_i\left(t_j^{f3}\right)V_k\left(t_j^{f3}\right)\left(1+t_j^{f3}\right) &= \left(n+\frac{1}{2}\right)\begin{dcases}
				(-1)^{\frac{i-k}{2n+1}} &, 2n+1 \mid i-k, \\
				(-1)^{\frac{i+k+1}{2n+1}} &, 2n+1 \mid i+k+1, \\
				0 &, \text{otherwise},
			\end{dcases}\label{lem:dis_otho_Vn}\\
			\sum_{j=0}^{n-1} W_i\left(t_j^{f4}\right)W_k\left(t_j^{f4}\right)\left(1-t_j^{f4}\right) &= \left(n+\frac{1}{2}\right)\begin{dcases}
				1 &, 2n+1 \mid i-k, \\
				-1 &, 2n+1 \mid i+k+1, \\
				0 &, \text{otherwise},
			\end{dcases}\label{lem:dis_otho_Wn}
		\end{alignat}
		where $\widetilde{\gamma}_k$ is as defined in \eqref{eq:gamma-tilde}.
	\end{lemma}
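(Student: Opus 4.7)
The proof plan is to exploit the trigonometric representations in \eqref{eq:ChebyPoly}, turn each Chebyshev product into a sum of two cosines via product-to-sum identities, and then evaluate the resulting discrete cosine sums at the equispaced angles $\theta_j$ listed in Table~\ref{T:quadratures} by means of the standard finite geometric series. Since the lemma itself advertises that only the F-I case \eqref{eq:dis_otho_FFQ} will be proved in full detail and the remaining four cases are analogous, I would structure the write-up as one detailed argument (for $T_n$ at the F-I nodes) followed by a short paragraph explaining what changes for \eqref{lem:dis_otho_CC}--\eqref{lem:dis_otho_Wn}.

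For the F-I case, set $\theta_j = (2j+1)\pi/(2n)$ so that $T_i(t_j^{f1})T_k(t_j^{f1}) = \cos(i\theta_j)\cos(k\theta_j) = \tfrac{1}{2}\bigl[\cos((i-k)\theta_j) + \cos((i+k)\theta_j)\bigr]$. Thus it suffices to evaluate, for any non-negative integer $m$, the sum
\begin{equation*}
S_n(m) \;=\; \sum_{j=0}^{n-1}\cos\!\left(m\,\frac{(2j+1)\pi}{2n}\right)
\;=\; \mathrm{Re}\!\left[e^{im\pi/(2n)}\sum_{j=0}^{n-1} e^{ij m\pi/n}\right].
\end{equation*}
If $2n \mid m$, then $m\pi/n$ is a multiple of $2\pi$ and every summand is $(-1)^{m/(2n)}$, giving $S_n(m) = n\,(-1)^{m/(2n)}$. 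Otherwise the inner geometric series collapses to $\bigl(e^{im\pi}-1\bigr)/\bigl(e^{im\pi/n}-1\bigr)$; multiplying by the outside factor $e^{im\pi/(2n)}$ one finds that the resulting quantity equals $\sin(m\pi)/\bigl(2\sin(m\pi/(2n))\bigr)$ after using $e^{ia}-1 = 2i\sin(a/2)\,e^{ia/2}$, and this vanishes since $m\in\Z$ and $m\pi/(2n)$ is not an integer multiple of $\pi$. Applying this to $m = |i-k|$ and $m = i+k$ gives the three cases in \eqref{eq:dis_otho_FFQ}: when $i=k=0$ both sums contribute $n$, producing the factor $2$; when exactly one of $2n\mid i-k$, $2n\mid i+k$ holds one obtains $(n/2)(-1)^{(i\mp k)/(2n)}$; and otherwise the sum is zero.

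For the other four quadratures, the same two-step recipe applies after substituting the appropriate trigonometric representation and weight factor. Specifically, $U_i(t_j)U_k(t_j)(1-t_j^2) = \sin((i+1)\theta_j)\sin((k+1)\theta_j) = \tfrac{1}{2}[\cos((i-k)\theta_j) - \cos((i+k+2)\theta_j)]$ for F-II; $V_i V_k (1+t_j) = 2\cos((i+\tfrac12)\theta_j)\cos((k+\tfrac12)\theta_j) = \cos((i-k)\theta_j) + \cos((i+k+1)\theta_j)$ for F-III; $W_i W_k(1-t_j) = \cos((i-k)\theta_j) - \cos((i+k+1)\theta_j)$ for F-IV; and for CC the weighted sum $\sum_j \widetilde\gamma_j^{-1}\cos(i\theta_j)\cos(k\theta_j)$ is the standard trapezoidal cosine sum at $\theta_j = j\pi/(n-1)$. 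In each case the divisibility moduli ($2(n+1)$, $2n+1$, $2n+1$, $2(n-1)$) arise precisely from the denominators in the defining $\theta_j$, and the signs in the stated formulae come from whether the combined cosine comes with a plus or minus in the product-to-sum identity above.

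I expect the main obstacle to be bookkeeping rather than mathematics: keeping track of the three outputs (the generic vanishing, the $i\mp k$ divisibility branches, and the doubled $(0,0)$ case) uniformly across five quadratures whose node-sets have slightly different parities and offsets. In particular, the $(0,0)$ coincidence used above for F-I — where both $2n\mid i-k$ and $2n\mid i+k$ hold — reappears in each family at different locations (e.g.\ it explains why $\widetilde\gamma_k$ rather than $1$ appears in \eqref{lem:dis_otho_CC}), and I would handle it by treating the two cosine summands independently and only afterwards checking when they simultaneously contribute. The analytic core (the geometric-series computation of $S_n(m)$) is essentially the same in all five cases.
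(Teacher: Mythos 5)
Your proposal follows essentially the same route as the paper's proof: the product-to-sum identity reduces each case to a discrete cosine sum over equispaced angles, which the paper evaluates by citing the standard closed form for cosines in arithmetic progression and you evaluate by deriving that same closed form from the finite geometric series, after which the divisibility/case analysis (including the check that the two branches cannot fire simultaneously for $(i,k)\neq(0,0)$, which you defer but correctly flag) is identical. The treatment of the remaining four quadratures via the corresponding product-to-sum identities and weight cancellations also matches the paper's ``analogous'' cases, so the proposal is correct and not materially different.
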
    
	\begin{proof}
		We provide the proof of~\eqref{eq:dis_otho_FFQ}, the remaining cases follow analogously. Denote the sum on the left-hand side of~\eqref{eq:dis_otho_FFQ} by $S$. Using a trigonometric identity of product of cosines, we may re-write the sum $S$ as 
		\begin{equation}
			S = \sum_{j=0}^{n-1} T_i(t_j^{f1})T_k(t_j^{f1}) =  \sum_{j=0}^{n-1} \cos{(i\theta_j^{f1})}\cos{(k\theta_j^{f1})} = \frac{1}{2}\sum_{j=0}^{n-1} \left( \cos{((i-k)\theta_j^{f1})}+\cos{((i+k)\theta_j^{f1})} \right),
		\end{equation}
		where $\theta_j^{f1}$ is as given in Table \ref{T:quadratures}.
		Introducing the notations $a=\tfrac{(i-k)\pi}{2n}$ and $b=\tfrac{(i+k)\pi}{2n}$, and applying standard formula for summing cosines with arguments in arithmetic progression (see  \cite[Eq. (4.37)-(4.39)]{mason2002chebyshev}), we obtain
		\begin{align}
			S &= \frac{1}{2}\sum_{j=0}^{n-1} \left( \cos{(a+2ja)}+\cos{(b+2jb)}\right) \\
			&= \frac{1}{2}\begin{cases}
				\frac{\sin{(2na)}}{2\sin{(a)}} &,  a\notin \pi \mathbb{Z}\\
				n \cos{(a)} &, a \in \pi \mathbb{Z}
			\end{cases}+ \frac{1}{2}\begin{cases}
				\frac{\sin{(2nb)}}{2\sin{(b)}} &,  b\notin \pi \mathbb{Z}\\
				n \cos{(b)} &, b \in \pi \mathbb{Z}
			\end{cases} \\
			&=  \frac{1}{2}\begin{cases}
				0 &,  2n \nmid i-k\\
				n(-1)^{\frac{i-k}{2n}} &, 2n \mid i-k
			\end{cases}+ \frac{1}{2}\begin{cases}
				0 &,  2n \nmid i+k\\
				n(-1)^{\frac{i+k}{2n}} &, 2n \mid i+k
			\end{cases}. 
		\end{align}
		Next, we consider the divisibility conditions. Suppose $2n \mid (i-k)$ and $(i,k)\neq(0,0)$. Then necessarily $2n \nmid (i+k)$. Indeed, if $2n \mid (i+k)$ as well, then  $2n | ((i+k)-(i-k)) = 2k$,
		which implies $n \mid k$. This is impossible since $0 < k < n$. Hence, in this case we obtain  $S = \frac{n}{2}(-1)^{\frac{i-k}{2n}}$.
		Conversely, by contraposition, if $(i,k)\neq(0,0)$ and $2n \mid (i+k)$, then $2n \nmid (i-k)$, which yields  $S = \frac{n}{2}(-1)^{\frac{i+k}{2n}}$.
		Finally, when $(i,k)=(0,0)$ we have $S=n$, while in all remaining cases we obtain $S=0$. This completes the proof.
	\end{proof}
	
	%Although we present these formulas as discrete orthogonality relations of the Chebyshev polynomials with respect to specific quadrature nodes, they can also be interpreted as Discrete Fourier Transforms or more generally known Discrete Cosine Transforms (DCTs) and Discrete Sine Transform (DSTs). In fact, each case in Lemma~\ref{lem:dis_otho_gen} corresponds to a variant of the DCT/DST as the Chebyshev polynomials $T_n$, $U_n$, $V_n$, and $W_n$ reduce to cosine or sine factors on uniform grids.
	%=================================================================%
	\begin{lemma}\label{Ap1}(\textbf{Integral of powers of sine and cosine})\\
		For non-negative integers $\ell,~q$ and $M$, the following identity hold: 
		\begin{equation*}
			\int_{-\pi}^{\pi} \sin^{\ell}{(\theta)}\cos^{q}{(\theta)}\zeta_M(k\theta) d\theta = 0,
		\end{equation*}
		where $\zeta_M(\theta) = \cos{\theta}$ if $M$ is even, else $\zeta_M(\theta) = \sin{\theta}$.
	\end{lemma}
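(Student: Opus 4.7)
The plan is to exploit the fact that $\sin^{\ell}(\theta)\cos^{q}(\theta)$ is a trigonometric polynomial of degree at most $\ell+q$, together with the orthogonality of the Fourier basis on $[-\pi,\pi]$. I note at the outset that the identity as written appears to need an implicit hypothesis $\ell+q<k$, supplied by the context of its use in \eqref{eq_int_powsincos} (where $\ell=j$, $q=i+n-j$, and $i+n<k$); without such a restriction the statement fails outright, as one sees from $\ell=0$, $q=2$, $k=2$, $M=0$. I would either state this hypothesis explicitly in the lemma or document it immediately before the proof.

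The first step is to apply Euler's formulas $\sin\theta=\tfrac{1}{2i}(e^{i\theta}-e^{-i\theta})$ and $\cos\theta=\tfrac{1}{2}(e^{i\theta}+e^{-i\theta})$ and invoke the binomial theorem to expand
\[
\sin^{\ell}(\theta)\cos^{q}(\theta) \;=\; \sum_{\nu=-(\ell+q)}^{\ell+q} a_{\nu}\, e^{i\nu\theta},
\]
for certain constants $a_{\nu}$ whose explicit form is irrelevant; the only feature used is that the support of the expansion is contained in $\{|\nu|\le \ell+q\}$. One similarly rewrites $\zeta_{M}(k\theta)$ as $\tfrac{1}{2}(e^{ik\theta}+e^{-ik\theta})$ when $M$ is even, and as $\tfrac{1}{2i}(e^{ik\theta}-e^{-ik\theta})$ when $M$ is odd.

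Substituting these expansions into the integral reduces it to a finite linear combination of integrals of the form $\int_{-\pi}^{\pi} e^{i(\nu\pm k)\theta}\,d\theta = 2\pi\,\delta_{\nu\pm k,0}$. The hypothesis $\ell+q<k$ forces $|\nu|\le \ell+q<k$ for every $\nu$ appearing in the sum, so $\nu\pm k\ne 0$ always, and each such integral vanishes. Summing over $\nu$ produces zero, as claimed.

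The main obstacle here is interpretative rather than computational: identifying the implicit degree restriction $\ell+q<k$ under which the stated identity holds. Once that is made explicit, the argument is a one-line orthogonality calculation, and one could equivalently phrase it by observing that $\sin^{\ell}\cos^{q}$ has zero Fourier coefficients at frequencies above $\ell+q$ and therefore is orthogonal to $\cos(k\theta)$ and $\sin(k\theta)$ whenever $k>\ell+q$.
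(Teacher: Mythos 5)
Your proof is correct and follows essentially the same route as the paper's own argument: Euler's formulas, binomial expansion, and the vanishing of $\int_{-\pi}^{\pi} e^{i\nu\theta}\,d\theta$ for $\nu\neq 0$. Your observation that the lemma requires the implicit hypothesis $\ell+q<k$ is well taken --- the paper's statement omits it, but its proof silently inserts exactly this condition ("It suffices to show that for non-negative integers $\ell,q,k$, such that $\ell+q<k$\dots"), and the lemma is only ever invoked under that restriction in \eqref{eq_int_powsincos}.
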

	\begin{proof}
It suffices to show that for non-negative integers $\ell,q,k$, such that $\ell + q < k$, \[\int_{-\pi}^{\pi} \sin^\ell (\theta)\cos^q (\theta) e^{-ik\theta} dx = \int_{-\pi}^{\pi} \sin^\ell (\theta)\cos^q (\theta) \cos{(k\theta)} dx-i\int_{-\pi}^{\pi} \sin^\ell (\theta)\cos^q (\theta) \sin{(k\theta)} dx= 0.  \]Substituting Euler representations of $\sin{(\theta)} = \dfrac{e^{i\theta}-e^{-i\theta}}{2i}$ and $\cos{(\theta)} = \dfrac{e^{i\theta}+e^{-i\theta}}{2}$, 
	\begin{gather*}
		\int_{-\pi}^{\pi} \sin^\ell (\theta)\cos^q (\theta) e^{-ik\theta} d\theta  = \int_{-\pi}^{\pi} \left(\frac{e^{i\theta}-e^{-i\theta}}{2i}\right)^\ell \left(\frac{e^{i\theta}+e^{-i\theta}}{2}\right)^q e^{-ik\theta} d\theta
		\\
		=\frac{1}{2\pi}\frac{1}{2^{\ell+q} i^{\ell}}\sum_{r_1=0}^{\ell}\sum_{r_2=0}^{q}\dbinom{\ell}{r_1}\dbinom{q}{r_2}(-1)^{r_1}\int_{-\pi}^{\pi} e^{i(\ell+q-2(r_1+r_2)-k)\theta} d\theta.
	\end{gather*} 
	Since  $\ell+q-2(r_1+r_2)-k \le (\ell+q)-k < 0$, the argument in the exponential power is non-zero. Thus, the above integral is zero. 
	\end{proof}
		
	\bibliographystyle{plain}

	\bibliography{references.bib} 
	
\end{document}